\documentclass[10pt]{article}

\usepackage{geometry}
\usepackage{hyperref}
\usepackage{amsmath,amssymb,amsthm}
\usepackage[active]{srcltx}
\usepackage[dvips]{graphicx}
\usepackage{mathrsfs}
\geometry{a4paper}

\newtheorem{thm}{\textbf{Theorem}}[section]
\newtheorem{lem}[thm]{\textbf{Lemma}}
\newtheorem{prop}[thm]{\textbf{Proposition}}
\theoremstyle{remark}
\newtheorem{rem}[thm]{\textbf{Remark}}
\newtheorem{cor}[thm]{\textbf{Corollary}}

\theoremstyle{definition}

\newtheoremstyle{Claim}{}{}{\itshape}{}{\itshape\bfseries}{:}{ }{#1}
\theoremstyle{Claim}

\newcommand{\Rset}{\mathbb{R}}
\newcommand{\RsetN}{ {\mathbb{R}^N} }
\newcommand{\Tn}{ {\mathbb{T}^N} }
\newcommand{\intRn}{\int_{\Rset^N}}
\newcommand{\intTn}{\int_\Tn}

\newcommand{\J}{\mathcal{E}}
\newcommand{\F}{\mathscr{F}}

\begin{document}

\title{Stationary focusing Mean Field Games}
\author{Marco Cirant}

\maketitle

\begin{abstract} We consider stationary viscous Mean-Field Games systems in the case of local, decreasing and unbounded coupling. These systems arise in ergodic mean-field game theory, and describe Nash equilibria of games with a large number of agents aiming at aggregation. We show how the dimension of the state space, the behavior of the coupling and the Hamiltonian at infinity affect the existence and non-existence of regular solutions. Our approach relies on the study of Sobolev regularity of the invariant measure and a blow-up procedure which is calibrated on the scaling properties of the system. In very special cases we observe uniqueness of solutions. Finally, we apply our methods to obtain new existence results for MFG systems with competition, namely when the coupling is local and increasing.
\end{abstract}

\noindent
{\footnotesize \textbf{AMS-Subject Classification}}. {\footnotesize 35J47, 49N70, 35B33.}\\
{\footnotesize \textbf{Keywords}}. {\footnotesize Concentration, critical exponent, Pohozaev identity, Gagliardo-Nirenberg inequality.}

\section{Introduction}

In this paper we investigate stationary viscous Mean Field Games (MFG) systems of the form
\begin{equation}\label{MFG}
\begin{cases}
- \Delta u(x) + H(\nabla u(x)) + \lambda = V(x) - f(m(x))  \\
- \Delta m(x) -{\rm div}(\nabla H(\nabla u(x)) \, m(x)) = 0  & \text{in $\Omega$}  \\
\int_\Omega m \, dx = 1, \, m > 0,
\end{cases}
\end{equation}
where the function $f$ is non-negative, and $\Omega \subseteq \RsetN$. In particular, we have in mind nonlinearities of the form
\begin{equation}\label{Hf_partic}
H(p) = \frac{1}{\gamma} |p|^{\gamma}, \quad \gamma > 1, \qquad \text{and} \qquad f(m) = C_f m^{\alpha}, \quad C_f, \alpha > 0.
\end{equation}

The MFG system \eqref{MFG} captures Nash equilibria of infinite-horizon games with a large number of indistinguishable rational agents, who seek to optimize an individual utility. In particular, a typical agent of the game, distributed in the long-time regime with invariant density $m$, pays a cost which depends on his velocity and $V(x) - f(m(x))$, where $x \in \Omega$ is his own state. While $V$ can be considered as a fixed potential, $-f(m(x))$ depends on the distribution $m$ itself: this term realizes the coupling between the individual and the overall population, and the coupling between the Hamilton-Jacobi-Bellman and the Kolmogorov equations in \eqref{MFG} at the PDE level. In this model, every agent is also subject to a Brownian noise.

From the game point of view, if $-f(\cdot)$ is an increasing function every individual aims at avoiding regions where the population is highly distributed. This class of MFG systems has received a considerable attention in the last years, starting from the seminal works \cite{HCM1, HCM2, jeux1, jeux2, LasryLions}. It has been shown that \eqref{MFG} corresponds to the long-time limit of non-stationary MFG (see \cite{Cardalialong, Cardalialong2}). For a description and up-to-date developments on MFG we refer the reader to \cite{MR3134900, MR3195844, Cardalianotes, LionsVideo}.

Here, we address the problem of existence of solutions to \emph{focusing} MFG systems, namely when the coupling $-f(\cdot)$ in \eqref{MFG} is a \emph{decreasing} function; that is, we assume that every player of the game is attracted by regions where the population is highly distributed. As far as we know, this setting has not yet been expolored systematically; a study in the quadratic and linear-quadratic case has been carried out in \cite{BardiPriuli, gueant09, gueant12}. A serious technical difficulty here comes from the fact that the couplings $-f$ we consider are not bounded from below. The particular coupling $-f(m) = \log m$, having the opposite monotonicity but lacking of boundedness from below, has been treated for example in \cite{PV, GomesPimentel} (see also references therein). Still, we consider this framework \emph{defocusing}, as $ \log(\cdot)$ is increasing. We stress that in our case the lack of increasing monotonicity of $-f$ cuts off from a large class of general approaches that have been developed in the MFG literature, for example in \cite{CardaliaguetGraber, PorrettaWeak, GomesFerreira}. 

As for the existence of solutions, we set our problem on the flat torus, i.e. $\Omega = \Tn$; in this setting we avoid boundary issues, and exploit compactness of the state space. Our focus is to obtain solutions $(u, \lambda, m)$ of \eqref{MFG} such that $\nabla u$ and $m$ are (at least) \emph{bounded}. We believe that this requirement is meaningful from the point of view of the game: $-\nabla H(\nabla u)$ provides (formally) the optimal strategy of an average player in feedback form, and bounded $\nabla u$ guarantees boundedness of the optimal velocity (and therefore an agent does not have to move with infinite velocity). Moreover, boundedness of $m$ is a crucial point in our analysis: here, $m$ has an intrinsic tendency to concentrate and hence to develop singularities. We have to carefully examine the delicate interplay between the Brownian motion, that has a smoothing effect on the distribution, and the focusing behavior of the distribution itself, which is not an issue in the defocusing case.

At the PDE level, even a-priori bounds on the ergodic constant $\lambda$ are not obvious, as well as $L^1$ bounds on $m^{\alpha+1}$ and $|\nabla u|^\gamma m$, which are related to the ``energy'' of the system \eqref{MFG} (see Remark \ref{rem:var}); those bounds are somehow ``for free'' in the defocusing case. We clarify this aspect of the problem through Proposition \ref{prop_energy} and Corollary \ref{cor_energy}, which establish a regularity result for Kolmogorov equations (of independent interest), providing estimates on $m^{\alpha+1}$ with respect to $|\nabla u|^\gamma m$. In particular, we observe that there are two ``critical'' exponents
\[
\alpha_1 = \frac{\gamma'}{N} \quad < \quad \alpha_2 = \frac{\gamma'}{N-\gamma'},
\]
(where $\gamma' = \gamma/(\gamma-1)$ is the usual conjugate exponent) that give rise to three different qualitative behaviors for \eqref{MFG}:  if $\alpha \in (0, \alpha_1)$, then $\lambda$ and the ``energy'' are bounded a-priori (and hence a solution exists), while if $\alpha \in [\alpha_1, \alpha_2)$ estimates can be obtained only under additional conditions on the coupling. If $\alpha \in (\alpha_2, \infty)$ we will see that the problem may not possess bounded solutions. Such critical exponents can be better understood by having a look at the variational formulation of \eqref{MFG}, that is discussed in Remark \ref{rem:var}; basically, $\alpha_2$ is related to the Sobolev critical exponent, while $\alpha_1$ comes from the Gagliardo-Nirenberg inequality and the $L^1$ constraint on $m$.

\smallskip

We suppose that $H \in C^2(\RsetN \setminus \{0\})$ and that there exist $C_H > 0$, $\gamma > 1$ such that
\begin{equation}\label{Hass}
\begin{split}
\bullet \quad & C_H^{-1}|p|^\gamma-C_H \le H(p) \le C_H(|p|^{\gamma} + 1), \\
\bullet \quad & |\nabla H(p)| \le  C_H(|p|^{\gamma-1} + 1),\\
\bullet \quad & \nabla H(p) \cdot p - H(p) \ge C_H^{-1} |p|^\gamma - C_H,
\end{split}
\end{equation}
for all $p \in \RsetN$. Moreover, $f \in C^1((0, \infty))$, $V \in C^1(\Omega)$ and there exist $\alpha, C_f, C_V > 0$ such that
\begin{align}
& 0 \le f(m) \le C_f(m^\alpha + 1) \quad \text{for all $m \ge 0$}, \label{fass} \\
& 0 \le V(x) \le C_V \quad \text{for all $x \in \RsetN$}. \label{Vass1}
\end{align}
Of course, the model nonlinearities \eqref{Hf_partic} satisfy \eqref{Hass}, \eqref{fass}. Note that we are not requiring monotonicity of $f$ in general; our methods really rely on the behavior of $f$ at infinity.


The first main result of this paper, regarding existence of solutions in the ``subcritical'' case $\alpha < {\gamma'}/({N-\gamma'})$, is stated in the following

\begin{thm}\label{thm_existence}
Let $\Omega = \Tn$, $H, f, V$ be such that \eqref{Hass}, \eqref{fass}, \eqref{Vass1} hold. If
\[
\alpha < \gamma'/N,
\]
then there exists a solution $(u, \lambda, m) \in C^2(\Tn) \times \Rset \times W^{1,p}(\Tn)$, for all $p > 1$, of \eqref{MFG}. Else, if
\begin{equation}
\gamma'/N \le \alpha < \begin{cases}
\gamma'/(N-\gamma') & \text{if $\gamma' < N$} \\
\infty & \text{if $\gamma' \ge N$,}
\end{cases}
\end{equation}
a solution $(u, \lambda, m)$ of \eqref{MFG} exists under the additional condition that $C_f$ (in \eqref{fass}) is small enough.
\end{thm}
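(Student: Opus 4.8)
The plan is to set up a fixed-point scheme for a regularized/truncated system and then pass to the limit using a-priori bounds that come precisely from the regularity result for the Kolmogorov equation alluded to in the introduction (Proposition~\ref{prop_energy} and Corollary~\ref{cor_energy}). Concretely, first I would truncate the coupling: replace $f(m)$ by $f_k(m) := f(\min\{m,k\})$ (or mollify), so that the right-hand side of the Hamilton-Jacobi-Bellman equation becomes bounded; for each $k$ the defocusing-type theory is not available, but boundedness of the coupling makes a Schauder/Schaefer fixed-point argument routine. Given $m \in C(\Tn)$ with $\int m = 1$, solve the ergodic HJB equation $-\Delta u + H(\nabla u) + \lambda = V - f_k(m)$ for $(u,\lambda)$ (normalizing $\int u = 0$); with $f_k(m)$ bounded, standard ergodic-problem theory gives $u \in C^{2,\theta}$ and a unique $\lambda$ with a bound depending on $\|f_k(m)\|_\infty$. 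Then solve the linear Kolmogorov equation $-\Delta m - \mathrm{div}(\nabla H(\nabla u)\,m) = 0$, $\int m = 1$, which has a unique positive solution $m \in W^{1,p}$ for all $p$, depending continuously on $\nabla u$. Composing these two maps gives a compact map on $\{m : \int m = 1,\ m \ge 0\} \subset C(\Tn)$, and Schaefer's theorem (or Schauder on a suitable convex set) yields a solution $(u_k,\lambda_k,m_k)$ of the truncated system.

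The heart of the matter is the $k$-independent a-priori bounds. Here I would exploit the ``energy''/testing identities of the system. Multiplying the HJB equation by $(m_k - 1)$ and the Kolmogorov equation by $u_k$, integrating by parts and adding, one gets an identity of the form $\lambda_k + \int (\nabla H(\nabla u_k)\cdot \nabla u_k - H(\nabla u_k))\,m_k + \int f_k(m_k)\,m_k = \int f_k(m_k) + \int V(m_k-1) + \lambda_k\cdot 0$, which, using the coercivity assumption \eqref{Hass} (third bullet) and $0 \le f_k(m_k) \le C_f(m_k^\alpha+1)$, \eqref{Vass1}, produces the crucial bound
\[
C_H^{-1}\int |\nabla u_k|^\gamma m_k \;+\; C_f\int m_k^{\alpha+1} \;\le\; C_f\int m_k^\alpha \;+\; C,
\]
with $C$ depending only on $C_H, C_V, N, \gamma$. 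Now I invoke Corollary~\ref{cor_energy}: it should say that for the Kolmogorov equation one controls $\|m_k\|_{\alpha+1}^{\alpha+1}$ (or higher norms) by $\big(\int|\nabla u_k|^\gamma m_k\big)^{\sigma}$ for an explicit exponent $\sigma = \sigma(N,\gamma,\alpha)$ coming from Gagliardo-Nirenberg together with $\|m_k\|_1 = 1$, and crucially $\sigma < 1$ exactly when $\alpha < \gamma'/N$, while $\sigma = 1$ at $\alpha = \gamma'/N$ and the Sobolev threshold $\gamma'/(N-\gamma')$ marks where even the linear estimate breaks. Feeding this into the displayed inequality: when $\alpha < \gamma'/N$ one has $\int|\nabla u_k|^\gamma m_k \lesssim 1 + \big(\int|\nabla u_k|^\gamma m_k\big)^{\sigma}$ with $\sigma<1$, hence an absolute bound; when $\gamma'/N \le \alpha < \gamma'/(N-\gamma')$ the term $C_f\int m_k^\alpha$ on the right can be absorbed into $C_f\int m_k^{\alpha+1}$ plus $C_f$ (Young) and then into the Kolmogorov estimate, but now the estimate reads $\int|\nabla u_k|^\gamma m_k \le C_f\cdot C(N,\gamma,\alpha)\int|\nabla u_k|^\gamma m_k + \dots$, so one needs $C_f$ small to absorb the leading term and close the loop. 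In either case we get a uniform bound on $\int|\nabla u_k|^\gamma m_k$, hence on $\int m_k^{\alpha+1}$, hence (again by Corollary~\ref{cor_energy}, bootstrapping) on $\|m_k\|_{W^{1,p}}$ for all $p$; this makes $f_k(m_k) = f(m_k)$ for $k$ large (since $m_k$ is bounded uniformly) and gives uniform $C^{2,\theta}$ bounds on $u_k$ and a uniform bound on $\lambda_k$ via the HJB equation.

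Finally, pass to the limit $k\to\infty$: by the uniform estimates, up to subsequences $u_k \to u$ in $C^2(\Tn)$, $\lambda_k \to \lambda$, $m_k \rightharpoonup m$ in $W^{1,p}$ and strongly in $C^0$, with $\int m = 1$, $m \ge 0$; since $m_k$ are uniformly bounded, the truncation is inactive in the limit and $(u,\lambda,m)$ solves \eqref{MFG}; elliptic regularity for the Kolmogorov equation with the now-$C^1$ drift $\nabla H(\nabla u)$ upgrades $m$ to $W^{1,p}$ for all $p$, and $m>0$ follows from the strong maximum principle (Harnack) for the Kolmogorov equation. The main obstacle is the second regime: getting the Kolmogorov estimate in Corollary~\ref{cor_energy} to have the right structure — a \emph{linear} dependence on $\int|\nabla u|^\gamma m$ with a constant that is explicit in $\alpha,N,\gamma$ but independent of the solution — so that smallness of $C_f$ genuinely closes the a-priori bound; this is exactly the point where the critical Sobolev exponent $\gamma'/(N-\gamma')$ enters and where the argument would fail for $\alpha$ beyond it. A secondary subtlety is ensuring the fixed-point map is well-defined and continuous uniformly enough in $k$, in particular the continuous dependence of the Kolmogorov solution on the drift in a topology compatible with the HJB solvability.
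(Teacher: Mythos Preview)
Your overall architecture --- regularize, apply a fixed-point theorem, derive $k$-uniform energy bounds via Corollary~\ref{cor_energy}, then pass to the limit --- matches the paper's. Two points, one minor and one a genuine gap.

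First, your energy identity has the sign of the coupling reversed. In the focusing system the HJB reads $-\Delta u + H(\nabla u) + \lambda = V - f(m)$, so testing the HJB against $m$ and the Kolmogorov equation against $u$ yields (after using \eqref{Hass})
\[
C_H^{-1}\int_{\Tn} |\nabla u_k|^\gamma m_k \;\le\; C + C_f\int_{\Tn} m_k^{\alpha+1},
\]
with the self-interaction term on the \emph{right}, not the left; this is exactly what makes the focusing case nontrivial. Your subsequent logic (use Corollary~\ref{cor_energy} to control $\|m_k\|_{L^{\alpha+1}}$ by $\int|\nabla u_k|^\gamma m_k$ and close the loop, with the dichotomy $\alpha < \gamma'/N$ versus $\alpha \ge \gamma'/N$ governing whether smallness of $C_f$ is needed) is nonetheless the right one and coincides with the paper's.

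The real gap is the step from the energy bound to an $L^\infty$ bound on $m_k$. You write ``hence (again by Corollary~\ref{cor_energy}, bootstrapping) on $\|m_k\|_{W^{1,p}}$ for all $p$'', but Corollary~\ref{cor_energy} only yields $\|m_k\|_{L^\beta}$ for $\beta$ strictly below $N/(N-\gamma')$; there is no bootstrap from there to $L^\infty$, and without $\|m_k\|_{L^\infty} \le C$ you can neither remove your truncation $f_k$ nor apply Proposition~\ref{hjb_regularity} (which requires a bounded right-hand side) to get $C^2$ control of $u_k$. The paper closes this gap by a separate blow-up argument: assuming $M_k := \max m_k \to \infty$, one rescales
\[
\mu_k(x) = M_k^{-1} m_k(x_k + a_k x), \qquad v_k(x) = a_k^{\gamma'-2} u_k(x_k + a_k x), \qquad a_k = M_k^{-\alpha/\gamma'};
\]
the rescaled pair solves a system of the same type with uniformly bounded data, so local HJB estimates (Proposition~\ref{hjb_regularity}) and elliptic regularity force $\mu_k \ge c > 0$ on a fixed ball, whence $\int_{B_r} \mu_k^{\alpha+1} \ge c'$. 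On the other hand the scaling gives $\int \mu_k^{\alpha+1} = M_k^{\alpha N/\gamma' - (\alpha+1)} \int m_k^{\alpha+1} \to 0$ precisely when $\alpha < \gamma'/(N-\gamma')$, a contradiction. This blow-up step --- not the energy estimate --- is where the subcriticality condition \eqref{critical_alpha} actually enters, and it is the main new idea of the proof; your proposal omits it.
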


The proof of Theorem \ref{thm_existence} relies on an approximating procedure. Proposition \ref{prop_energy} provides the main estimates for $\lambda$ and the $L^1$ norm of $m^{\alpha +1}$ and $|\nabla u|^\gamma m$ (see \eqref{EI1}-\eqref{EI2}). The standard tool of Schauder's fixed point theorem implies the existence of a solution $(u_k, \lambda_k, m_k)$ of a suitable ``regularized'' version of \eqref{MFG}. It is crucial to produce  approximate solutions whose energy is uniformly bounded with respect to $k$. Still, such estimates are not powerful enough to pass to the limit $k \to \infty$ in the approximating problem \eqref{mkdef}. The new idea presented here, which is used to obtain uniform $L^\infty$ bounds for $m_k$, leans on a blow-up method and exploits the scaling properties of \eqref{MFG}. We observe that our blow-up procedure works if $\alpha$ is below the critical exponent $\alpha_2$, which is the same exponent that appears in Proposition \ref{prop_energy}. This key bound enables us to use the machinery of Schauder and classical elliptic estimates, and to prove the existence of a regular solution of \eqref{MFG}.

\begin{rem} We want Hamiltonians of the form \eqref{Hf_partic} to fit into our theory, and therefore we cannot expect to obtain \emph{classical} solutions in general: even if $u$ is $C^\infty$, $\nabla H(\nabla u)$ is just an Holder function if $\gamma < 2$, so $m$ may not be a solution in the classical sense of the Kolmogorov equation in \eqref{MFG}. We will look for solutions $(u,m) \in C^2 \times W^{1,p}$, and consider them ``regular'', at least if compared with \emph{weak} solutions that are obtained by other methods in the defocusing case (see \cite{PorrettaWeak} and \cite{GomesFerreira}). Still, if $\gamma \ge 2$, or if $\nabla H$ is suitably regular, solutions will be a-posteriori smooth in view of $L^\infty$-boundedness of $m$, $\nabla u$.
\end{rem}

The second part of this work focuses on the ``supercritical'' case $\alpha \ge \gamma'/(N-\gamma')$. In this regime, the fast decay of the coupling $-m^\alpha$ might not be compensated by viscosity, leading to spike formation in the distribution $m$. This point can be made more clear by considering the variational formulation of \eqref{MFG} (see again Remark \ref{rem:var}), where ``compactness'' of the problem is somehow lost.

We shift our attention to the whole space, that is $\Omega = \RsetN$. The flat torus is indeed not suitable anymore for studying non-existence and concentration phenomena, as $(u, m) \equiv (0, 1)$ is always a solution of \eqref{MFG} if $\Omega = \Tn$, for all $\alpha > 1$. An unbounded state space boils down the possibility of having the constant solution. We will also set $V \equiv 0$, for simplicity, and $H$ will be of the form \eqref{Hf_partic}. Moreover, we will consider classical solutions $u$ satisfying the following condition at infinity (see the discussion in Remark \ref{rem:infinity}).
\begin{equation}\label{uinfty}
\text{$u \to +\infty$ as $|x| \to \infty$}, \quad \text{and} \quad \text{$\exists c, \eta > 0$ s.t. $|\nabla u(x)| \le c(|x|^\eta + 1), \forall x \in \RsetN$.}
\end{equation}

In order to understand \eqref{MFG} in the case $\alpha \ge \gamma'/(N-\gamma')$ we multiply the equations in \eqref{MFG} by $x \cdot \nabla m$ and $x \cdot \nabla u$, and obtain new integral identities (see Proposition \ref{pohozaev}). Let us set \[F(m) := \int_0^m f(s) \, ds,\]
and suppose that the following condition holds true:
\begin{equation}\label{fass2}
(N-\gamma') f(m) m - N F(m) > 0 \quad \text{for all $m > 0$.}
\end{equation}
Note that if $f$ is of the form \eqref{Hf_partic} and $\alpha > \gamma'/(N-\gamma')$, then \eqref{fass2} holds.

The second main result of this paper, regarding non-existence of ``regular'' solutions, is stated as follows.

\begin{thm}\label{thm_nonex} Let $\Omega = \RsetN$, suppose that $H$ is of the form \[ H(p) = \frac{1}{\gamma} |p|^\gamma, \quad \gamma > 1, \] that \eqref{fass2} holds, and $V \equiv 0$. Then, \eqref{MFG} has no solutions $(u, \lambda, m) \in C^2(\RsetN) \times \Rset \times W^{1,1}(\RsetN)$ satisfying
\begin{equation}\label{finiteness}
\intRn |\nabla u|^\gamma m \, dx < \infty, \quad \intRn m^{\alpha+1} \, dx < \infty, \quad \intRn |\nabla u |\, |\nabla m| \, dx < \infty
\end{equation}
and \eqref{uinfty}.
\end{thm}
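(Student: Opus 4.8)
The plan is to derive a Pohozaev-type identity by testing the two equations of \eqref{MFG} against the natural scaling fields $x\cdot\nabla u$ and $x\cdot\nabla m$, and then to combine it with the two ``energy'' identities that come from testing the Hamilton-Jacobi-Bellman equation against $m$ and the Kolmogorov equation against $u$. First I would multiply the Kolmogorov equation by $x\cdot\nabla u$ and the HJB equation (with $V\equiv0$) by $x\cdot\nabla m$, integrate over $\RsetN$, and integrate by parts; the finiteness assumptions \eqref{finiteness} together with the growth control \eqref{uinfty} and $m\in W^{1,1}$ are exactly what is needed to justify the integrations by parts and to kill the boundary terms at infinity (one integrates over balls $B_R$ and sends $R\to\infty$ along a sequence on which the spherical integrals vanish). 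The homogeneity $H(p)=\frac1\gamma|p|^\gamma$ makes the first-order terms scale cleanly: $\nabla H(p)\cdot p = \gamma H(p) = |p|^\gamma$, which is what produces the explicit constants. In parallel, testing HJB against $m$ (using $\int m=1$) gives $\lambda = \intRn\bigl(|\nabla u|^\gamma - H(\nabla u)\bigr)m\,dx - \intRn f(m)m\,dx$, i.e. $\lambda = \frac{1}{\gamma'}\intRn|\nabla u|^\gamma m\,dx - \intRn f(m)m\,dx$, and testing Kolmogorov against $u$ gives $\intRn|\nabla u|^2\cdot(\text{stuff})$ — more precisely the identity relating $\intRn \nabla H(\nabla u)\cdot\nabla u\, m\,dx$ to $\intRn\nabla u\cdot\nabla m\,dx$, which by $\nabla H(p)\cdot p=|p|^\gamma$ reads $\intRn|\nabla u|^\gamma m\,dx = -\intRn \nabla u\cdot\nabla m\,dx$ up to sign conventions.

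Next I would assemble these identities so that all the $|\nabla u|^\gamma m$ terms and all the $\lambda$ terms cancel, leaving an identity that involves only $\intRn f(m)m\,dx$ and $\intRn F(m)\,dx$ with some fixed rational coefficients depending on $N$ and $\gamma'$ — concretely something of the shape
\[
\bigl[(N-\gamma')f(m)m - N F(m)\bigr]
\]
integrated against $dx$ equals zero (possibly after also using that the $\lambda$-term, being $\lambda\cdot\intRn(x\cdot\nabla m)\,dx = \lambda\cdot(-N\intRn m\,dx) = -N\lambda$, recombines with the scaling of the HJB constant). The Pohozaev scaling of $-\Delta u$ contributes $\frac{N-2}{2}\intRn|\nabla u|^2$-type terms in the classical setting, but here because the equation is first order in the principal nonlinearity the relevant contribution is the $\nabla H(\nabla u)\cdot x$ term whose divergence-structure integration by parts yields the factor $(N-\gamma')$ on the gradient term; matching this against the $N$ coming from scaling $F(m)$ and the constraint $\int m = 1$ is the arithmetic that has to work out. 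Once the combined identity reads $\intRn\bigl[(N-\gamma')f(m)m - NF(m)\bigr]dx = 0$, assumption \eqref{fass2} forces the integrand to be strictly positive wherever $m>0$, hence (since $m>0$ on $\RsetN$) a strictly positive integral, a contradiction; this gives non-existence.

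The main obstacle I expect is the rigorous justification of the boundary terms at infinity in the Pohozaev computation: the multiplier $x\cdot\nabla u$ grows, $u$ itself is unbounded, and one only has integrability of $|\nabla u|^\gamma m$, $m^{\alpha+1}$ and $|\nabla u|\,|\nabla m|$, not pointwise decay. The standard fix is to integrate over $B_R$, obtain a boundary term that is $O$ of a spherical integral of quantities whose radial integrals are finite by \eqref{finiteness} and \eqref{uinfty}, and then invoke the fact that if $\int_0^\infty g(R)\,dR<\infty$ then $\liminf_{R\to\infty} R\,g(R)=0$ along a subsequence — carefully checking that each boundary term (there are several: one from each integration by parts, involving $m(x\cdot\nabla u)$, $u\,m$, $(x\cdot\nabla u)\nabla u\cdot m$, $F(m)\,x$, etc.) is controlled this way, using \eqref{uinfty} to bound powers of $|\nabla u|$ and $|u|$ by powers of $|x|$. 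A secondary subtlety is the regularity needed to integrate by parts twice against a $W^{1,1}$ (not $C^1$) density $m$; one argues by an approximation/mollification or uses that $m\in W^{1,p}$ for the relevant $p$ is available a posteriori, or works directly with the distributional formulation of the Kolmogorov equation tested against the smooth-enough field built from $u\in C^2$.
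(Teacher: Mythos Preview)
Your overall architecture is the same as the paper's: a Pohozaev identity on balls $B_R$ (multiply HJB by $x\cdot\nabla m$ and Kolmogorov by $x\cdot\nabla u$), combined with the two energy identities obtained by testing HJB against $m$ and Kolmogorov against $u$, with boundary terms handled via $\liminf_{R\to\infty} R\int_{\partial B_R}|h|=0$ along a subsequence for each $h\in L^1(\RsetN)$. Your discussion of the boundary terms is essentially right, and the regularity worry about $m\in W^{1,1}$ is dealt with in the paper exactly as you suggest (mollify $H$ and pass to the limit).

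There is, however, a genuine gap. You assert that after assembling the identities ``all the $\lambda$ terms cancel'' and one is left with
\[
\intRn\bigl[(N-\gamma')f(m)m - NF(m)\bigr]\,dx = 0.
\]
This is not what comes out. If you carry out the algebra with $G(m)=-\lambda m-F(m)$, the Pohozaev identity yields
\[
-N\lambda - N\intRn F(m)\,dx + \Bigl(1-\tfrac{N}{\gamma}\Bigr)\intRn|\nabla u|^\gamma m\,dx + (2-N)\intRn\nabla u\cdot\nabla m\,dx = 0,
\]
and after inserting $\intRn\nabla u\cdot\nabla m=-\intRn|\nabla u|^\gamma m$ and $\intRn\nabla u\cdot\nabla m=-\lambda-\intRn f(m)m-\tfrac{1}{\gamma}\intRn|\nabla u|^\gamma m$ (your two energy identities), the gradient terms cancel but the $\lambda$ terms do \emph{not}: one obtains
\[
\intRn\bigl[(N-\gamma')f(m)m - NF(m)\bigr]\,dx = \gamma'\lambda.
\]
If instead you try to eliminate $\lambda$ via $\lambda=\tfrac{1}{\gamma'}\intRn|\nabla u|^\gamma m-\intRn f(m)m$, you end up with the harmless relation $\intRn|\nabla u|^\gamma m = N\intRn\bigl(f(m)m-F(m)\bigr)$, which gives no contradiction.

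So the argument does not close without an independent proof that $\lambda\le 0$. On $\RsetN$ this is not free (there is no maximum principle on a compact set to invoke), and it is a separate lemma in the paper: one tests the HJB equation against the Gaussian family $\mu_\epsilon(x)=(\epsilon/2\pi)^{N/2}e^{-\epsilon|x|^2/2}$, uses the Legendre inequality $\nabla u\cdot(\epsilon x)-\tfrac{1}{\gamma'}|\epsilon x|^{\gamma'}\le \tfrac{1}{\gamma}|\nabla u|^\gamma$, integrates on $B_R$, exploits $\nabla\mu_\epsilon=-\epsilon x\,\mu_\epsilon$ to cancel the drift, and lets $R\to\infty$ (polynomial growth of $\nabla u$ from \eqref{uinfty} kills the boundary term) and then $\epsilon\to 0$. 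Once $\lambda\le 0$ is in hand, \eqref{fass2} and the identity above give the contradiction exactly as you describe.
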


Theorems \ref{thm_existence} and \ref{thm_nonex} give a rather precise picture of the problem of existence of regular solution for \eqref{MFG}, showing that criticality phenomena arise. We have chosen the state space $\RsetN$ in the non-existence part of the work in order to avoid some technical difficulties, but we believe that existence of solutions can be obtained under the same assumptions on the growth of $f$ of Theorem \ref{thm_existence}. This and other aspects of MFG systems on the whole space will be matter of future work. The critical case $\alpha = \gamma'/(N-\gamma')$ is also not covered by our theorems. 

We conclude our study of focusing MFG by considering uniqueness of solutions. Even though the standard uniqueness condition by Lasry-Lions (see, for example, \cite{LasryLions}) is violated, we can lean on some results on uniqueness of ground states of nonlinear Schr\"odinger equations and show that solutions of a particular class of quadratic MFG systems are unique. The main difficulty here is that $u,m$ might not be unique even if the ergodic constant $\lambda$ is fixed, and $\lambda$ is itself an unknown of the problem; this issue is circumvented by exploiting invariances of the system. Our considerations on this aspect of the problem are collected in Section \ref{s:unique}.

\medskip

We mention that the methods developed here for focusing MFG can be implemented also to study defocusing MFG systems, that is
\begin{equation}\label{MFGde}
\begin{cases}
- \Delta u(x) + H(\nabla u(x)) + \lambda = V(x) + f(m(x))  \\
- \Delta m(x) -{\rm div}(\nabla H(\nabla u(x)) \, m(x)) = 0  & \text{in $\Omega$}  \\
\int_\Omega m \, dx = 1, \, m > 0,
\end{cases}
\end{equation}

Following the same lines of the proof of Theorem \ref{thm_existence}, we are able to prove the following
\begin{thm}\label{thm_existence_def}
Let $\Omega = \Tn$, $H, f, V$ be such that \eqref{Hass}, \eqref{fass}, \eqref{Vass1} hold. If
\begin{equation*}
\alpha < \begin{cases}
\gamma'/(N-\gamma') & \text{if $\gamma' < N$} \\
\infty & \text{if $\gamma' \ge N$,}
\end{cases}
\end{equation*}
then there exists a solution $(u, \lambda, m) \in C^2(\Tn) \times \Rset \times W^{1,p}(\Tn)$, for all $p > 1$, of \eqref{MFGde}.
\end{thm}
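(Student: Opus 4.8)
The plan is to mirror the proof of Theorem \ref{thm_existence} exactly, the only difference being the sign of the coupling, which here works \emph{in our favor}. First I would set up the same regularized system \eqref{mkdef}: replace $f(m)$ by a truncated/mollified coupling $f_k$ (bounded, say $f_k = f \circ T_k$ composed with a mollification, or $f_k(m) = f(m)/(1+ f(m)/k)$) so that the nonlinearity is bounded and Lipschitz, and apply Schauder's fixed point theorem on a ball of $W^{1,p}(\Tn) \times C^{1,\theta}(\Tn)$ to produce a solution $(u_k,\lambda_k,m_k) \in C^2 \times \Rset \times W^{1,p}$ of the regularized problem, with the normalization $\intTn m_k = 1$. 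This step is verbatim the same as in the focusing case and requires no monotonicity.

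The key point is that the a-priori estimates are now trivial. Integrating the HJB equation against $m_k$ and using the Kolmogorov equation as in Remark \ref{rem:var} (the computation behind Proposition \ref{prop_energy} / \eqref{EI1}--\eqref{EI2}) gives
\[
\lambda_k + \intTn \big(\nabla H(\nabla u_k)\cdot\nabla u_k - H(\nabla u_k)\big) m_k \, dx + \intTn f_k(m_k) m_k \, dx = \intTn V \, m_k \, dx \le C_V,
\]
and since $f_k \ge 0$ (so the coupling term is a \emph{good} sign, unlike in \eqref{MFG}) and the third structure condition in \eqref{Hass} bounds the Hamiltonian term from below by $C_H^{-1}\intTn |\nabla u_k|^\gamma m_k - C_H$, we immediately get uniform upper bounds on $\lambda_k$, on $\intTn |\nabla u_k|^\gamma m_k$, and on $\intTn f_k(m_k)m_k$ (hence on $\intTn m_k^{\alpha+1}$ up to the truncation, which passes to the limit by Fatou once we control it uniformly). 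A uniform lower bound on $\lambda_k$ comes from evaluating the HJB equation at a minimum point of $u_k$, where $-\Delta u_k \ge 0$, $H(\nabla u_k)=H(0) \le C_H$, giving $\lambda_k \le C_V - f_k(m_k(x_{\min})) + C_H \le C_V + C_H$; combined with an analogous argument at a maximum point of $m_k$ in the Kolmogorov equation (or the standard Bernstein/Lipschitz estimate for $u_k$ using $|\nabla u_k|^\gamma \le C(|V| + f_k(m_k) + |\lambda_k|)$ pointwise after the gradient bound), one closes the loop. So the energy bounds \eqref{EI1}--\eqref{EI2} hold uniformly in $k$ \emph{unconditionally}, with no smallness of $C_f$ — this is precisely why the defocusing range is the full $\alpha < \gamma'/(N-\gamma')$ rather than being split at $\gamma'/N$.

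From here one runs the same blow-up argument of Theorem \ref{thm_existence} to upgrade to a uniform $L^\infty$ bound on $m_k$: suppose $\|m_k\|_\infty \to \infty$, rescale around a near-maximum point of $m_k$ on the scale dictated by the scaling invariance of \eqref{MFG} (the same rescaling used in the focusing case — note the sign of $f$ does not affect the scaling exponents, only the limiting equation), and derive a contradiction with the condition $\alpha < \gamma'/(N-\gamma')$ via the Liouville-type / energy-compactness step; the subcriticality is exactly what makes the rescaled energy vanish in the limit. Once $\|m_k\|_\infty \le C$ uniformly, the HJB equation gives $\|\nabla u_k\|_\infty \le C$ by the gradient estimate, hence $m_k$ is bounded in $W^{1,p}$ for every $p$ by elliptic regularity for the Kolmogorov equation with a bounded drift, and $u_k$ is bounded in $C^{2,\theta}$ by Schauder. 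Passing to the limit (up to subsequences, normalizing $u_k$ by $u_k - u_k(0)$ or $u_k - \min u_k$ to fix the additive constant) yields a solution $(u,\lambda,m) \in C^2(\Tn) \times \Rset \times W^{1,p}(\Tn)$ of \eqref{MFGde}, with $m > 0$ by the strong maximum principle and $\intTn m = 1$ preserved. The main obstacle — and the only place real work is needed — is the blow-up/$L^\infty$ step, but it is identical to the focusing case and in fact slightly easier since the coupling term has the favorable sign throughout; all the other steps are routine once Proposition \ref{prop_energy} and the Schauder machinery from the proof of Theorem \ref{thm_existence} are in hand.
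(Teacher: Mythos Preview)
Your overall strategy---regularize, obtain uniform energy bounds exploiting the favorable sign of the coupling, run the blow-up argument of Lemma~\ref{lem_infty} to get $\|m_k\|_{L^\infty}\le C$, then pass to the limit---is exactly the paper's approach. However, the execution of the energy estimates contains sign errors that make the argument as written incorrect.

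The identity you display has the wrong sign: multiplying the HJB equation in \eqref{MFGde} by $m_k$ and using the Kolmogorov equation actually yields
\[
\lambda_k \;=\; \intTn \big(\nabla H(\nabla u_k)\cdot\nabla u_k - H(\nabla u_k)\big)\, m_k \, dx \;+\; \intTn V\, m_k \, dx \;+\; \intTn f_k(m_k)\, m_k \, dx,
\]
so the three nonnegative terms sit on the \emph{same} side as $\lambda_k$, not the opposite side. This gives only a \emph{lower} bound on $\lambda_k$ (and the inequality $C_H^{-1}\intTn |\nabla u_k|^\gamma m_k \le \lambda_k + C_H$), not a simultaneous upper bound on everything as you claim. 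The paper obtains the missing upper bound on $\lambda_k$ via a \emph{second} inequality, namely by integrating the HJB equation over $\Tn$ (against $1$, not $m_k$), and then combining the two; only then does the kinetic-energy bound follow, and the uniform control on $\intTn m_k^{\alpha+1}$ comes from Corollary~\ref{cor_energy} applied to the kinetic term, not from $\intTn f_k(m_k)m_k$ directly. Your maximum-principle argument is also backwards: evaluating at a \emph{minimum} of $u_k$ produces an upper bound $\lambda_k \le V(x_{\min}) + f_k(m_k(x_{\min})) - H(0)$ that depends on $m_k$, which you do not yet control; the clean one-sided bound $\lambda_k \ge -H(0)$ comes from evaluating at a \emph{maximum} of $u_k$ (this is the ``$\lambda_k$ is positive by the Maximum Principle'' step in the paper). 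These are fixable slips, and once corrected your argument coincides with the paper's.
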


Existence of smooth solutions for \eqref{MFGde} is still an open problem for general power-like nonlinearities, and known results require conditions on the exponents $\alpha, \gamma$; see \cite{MR3333058, GomesPatrizi, GomesMorgado, PV}. As far as we know, the best available results are contained in \cite{PV}, and require $\alpha \le \gamma'/N$. We are able to improve this condition in Theorem \ref{thm_existence_def}. We stress that the critical exponent $\gamma'/(N-\gamma')$ does not seem to be optimal in the defocusing case (if $\gamma =2$, smooth solutions exist for all $\alpha \ge 0$ and for all $N$), but is fundamental in the focusing case, where concentration of solutions is intrinsically likely to arise.

This paper is structured as follows. In Section \ref{s:ex} we will discuss the existence of solutions for \eqref{MFG} in the flat torus. Section \ref{s:nonex} will be devoted to the non-existence proof in the whole space, by means of Pohozaev identities. In Section \ref{s:unique} we will collect some observations on uniqueness of solutions in some special cases, while in Appendix \ref{a:defoc} we will prove the existence of regular solutions in the defocusing and ``subcritical'' case.

{\bf Notation.} Throughout the paper, we will refer to the HJB equation and to the Kolmogorov equation in \eqref{MFG} as the first and second equation of the system, respectively. For all $R > 0$, $x \in \RsetN$, $B_R(x) := \{y \in \RsetN: |y-x| < R\}$, $B_R := B_R(0)$. If $\Omega$ is a smooth domain of $\RsetN$, $\nu$ will denote the outward normal vector field at $\partial \Omega$.  Finally, $C, C_1, C_2,\ldots$ will be (positive) constants we need not to specify.

\section{Existence of solutions}\label{s:ex}

This section is devoted to the proof of existence of solutions to \eqref{MFG}, where the state space $\Omega$ is the flat torus $\Tn$. Unless otherwise specified, $L^p$ and $W^{1,p}$ norms will be intended on $\Tn$, that is $\| \cdot \|_{L^p} = \| \cdot \|_{L^p(\Tn)}$. We will always assume that $H$ satisfies \eqref{Hass} and $\alpha$ of assumption \eqref{fass} will be such that
\begin{equation}\label{critical_alpha}
\alpha < \begin{cases}
\gamma'/(N-\gamma') & \text{if $\gamma' < N$} \\
\infty & \text{if $\gamma' \ge N$.}
\end{cases}
\end{equation}

We start by recalling some classical results on regularity of solutions of uniformly elliptic and HJB equations, that will be used in the sequel.

\begin{prop}\label{ell_regularity} Let $\Omega \subseteq \RsetN$, $p > 1$ and $u \in L^p(\Omega)$ be such that
\[
\left| \int_\Omega u \, \Delta \varphi \, dx \right| \le K\|\varphi\|_{W^{1,p'}(\Omega)} \quad \text{for all $\varphi \in C^\infty_c(\Omega)$}
\]
for some $K > 0$. Then, for all $\Omega' \subset \subset \Omega$, $u \in W^{1,p} (\Omega')$ and there exists $C > 0$, depending on $\Omega$, $\Omega'$ and $p$ such that
\[
\|u\|_{W^{1,p}(\Omega')} \le C(K + \|u \|_{L^p(\Omega)}).
\]
\end{prop}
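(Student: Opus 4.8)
The statement is an interior $W^{1,p}$ estimate for a function $u$ whose distributional Laplacian is, roughly speaking, the divergence of an $L^p$ vector field — the hypothesis $|\int u \Delta \varphi| \le K \|\varphi\|_{W^{1,p'}}$ says precisely that $\Delta u$ extends to a bounded functional on $W^{1,p'}$, i.e. $\Delta u \in W^{-1,p}$ with norm $\le K$. So this is the standard Calderón–Zygmund / Agmon–Douglis–Nirenberg regularity theory for the Laplacian, in the formulation $\Delta u \in W^{-1,p}_{\loc} \Rightarrow u \in W^{1,p}_{\loc}$, together with the quantitative bound. The plan is to reduce to the known a priori estimate by a cutoff-and-Newtonian-potential argument, or alternatively to quote the result from a standard reference (e.g. Gilbarg–Trudinger, or Simon's $W^{1,p}$ estimates) once the hypothesis is rephrased as a bound on $\|\Delta u\|_{W^{-1,p}(\Omega)}$.

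Concretely, I would first fix $\Omega' \subset\subset \Omega'' \subset\subset \Omega$ and a cutoff $\zeta \in C^\infty_c(\Omega'')$ with $\zeta \equiv 1$ on $\Omega'$. Set $w := \zeta u$, extended by zero to $\RsetN$. A direct computation gives, in the sense of distributions on $\RsetN$, $\Delta w = \zeta \Delta u + 2\nabla\zeta\cdot\nabla u + u\Delta\zeta$; the troublesome middle term is handled by writing $2\nabla\zeta\cdot\nabla u = 2\,\mathrm{div}(u\nabla\zeta) - 2u\Delta\zeta$, so that $\Delta w = \zeta\Delta u + 2\,\mathrm{div}(u\nabla\zeta) - u\Delta\zeta =: \mathrm{div}(G) + g$ where $G \in L^p(\RsetN)$ with $\|G\|_{L^p} \le C\|u\|_{L^p(\Omega)}$, $g \in L^p$ similarly bounded, and $\zeta\Delta u$ is a compactly supported element of $W^{-1,p}$ of norm $\le CK$ (testing $\zeta\Delta u$ against $\varphi$ amounts to testing $\Delta u$ against $\zeta\varphi$, and $\|\zeta\varphi\|_{W^{1,p'}} \le C\|\varphi\|_{W^{1,p'}}$). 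Representing $w = \mathcal{N} * (\Delta w)$ via the Newtonian potential $\mathcal{N}$ and differentiating once, one sees that $\nabla w$ is obtained from $G$, $g$ and $\zeta\Delta u$ by composition with operators of Calderón–Zygmund type (the $\nabla^2 \mathcal{N}*$ for the $\mathrm{div}(G)$ part, $\nabla\mathcal{N}*$ — which maps $L^p \to L^{p^*} \hookrightarrow L^p_{\loc}$, or more simply one controls it on the bounded set — for $g$, and $\nabla\mathcal{N}$ paired against the $W^{-1,p}$ datum), all of which are bounded on $L^p$ for $1 < p < \infty$. This yields $\|\nabla w\|_{L^p(\RsetN)} \le C(K + \|u\|_{L^p(\Omega)})$, and since $w = u$ on $\Omega'$ we get the claimed bound $\|u\|_{W^{1,p}(\Omega')} \le C(K + \|u\|_{L^p(\Omega)})$, with $C$ depending only on $\Omega, \Omega', p$ (through $\zeta$ and the Calderón–Zygmund constants).

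The main technical obstacle is the borderline integrability of the lower-order term $g$: the Newtonian potential $\nabla\mathcal{N}*g$ of an $L^p$ function is a priori only in $L^{p^*}$ (or $L^p_{\loc}$ after using that everything is supported in a fixed bounded set), and when $\Omega$ is unbounded one must be slightly careful that the kernel estimates still localize correctly — this is why the statement only claims an \emph{interior} estimate on $\Omega' \subset\subset \Omega$. A cleaner route that avoids this bookkeeping entirely is to invoke the classical result directly: the hypothesis says $u$ is a distributional solution of $-\Delta u = T$ with $T \in W^{-1,p}(\Omega)$, $\|T\|_{W^{-1,p}} \le K$, and the interior $W^{1,p}$ estimate for such solutions is standard (it follows e.g. from the $L^p$ theory for the Laplacian by duality and interpolation, or from \cite{MR3195844}-type references); one then simply cites it. Either way there is no conceptual difficulty — this is a packaging lemma whose role is to let the fixed-point scheme later in Section \ref{s:ex} bootstrap regularity of the Kolmogorov equation — so I would keep the proof short, most likely presenting the cutoff reduction and then quoting the flat ($\Omega = \RsetN$) Calderón–Zygmund estimate.
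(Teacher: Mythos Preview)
Your proposal is correct and is in fact considerably more detailed than what the paper does: the paper's entire proof of this proposition is a one-line citation, ``See, for example, \cite[Theorem 6.1]{agmon}.'' In other words, the ``cleaner route'' you mention at the end --- simply invoking the classical interior $W^{1,p}$ estimate for $-\Delta u = T$ with $T \in W^{-1,p}$ --- is exactly the route the paper takes, with Agmon's monograph as the reference. Your cutoff-plus-Newtonian-potential argument is a valid and standard way to \emph{prove} that classical result, so there is no discrepancy in content; you have just unpacked what the paper leaves as a black box.
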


\begin{proof} See, for example, \cite[Theorem 6.1]{agmon}. \end{proof}

\begin{prop}\label{hjb_regularity} Suppose that $\Omega \subseteq \RsetN$, $H$ satisfies \eqref{Hass} and $u \in C^2(\RsetN)$ is such that
\[
\left| -\Delta u + H(\nabla u) \right| \le K \quad \text{in $\Omega$}.
\]
Then, for all $\Omega' \subset \subset \Omega$, $q > 1$,
\[
\|\nabla u\|_{L^q(\Omega')} \le C,
\]
where $C > 0$ depends on $\Omega'$, $\Omega$, $K$, $q$ and $C_H, \gamma$ in \eqref{Hass}.
\end{prop}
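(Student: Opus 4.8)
The plan is to reduce to a local statement on balls and then combine one robust integral estimate with a standard elliptic bootstrap. Fix $\Omega' \subset\subset \Omega$; after covering $\Omega'$ by finitely many balls and translating and rescaling, it suffices to prove the bound on $B_1$ under the assumption $B_2 \subset \Omega$. Throughout I would fix a cut-off $\xi \in C_c^\infty(B_2)$ with $0 \le \xi \le 1$ and $\xi \equiv 1$ on $B_1$, and keep in mind that no constant should ever involve a norm of $u$ itself — consistent with the invariance of the equation under $u \mapsto u + c$.

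The core step is a first-order estimate giving $\nabla u \in L^\gamma$ locally. From $|{-\Delta u} + H(\nabla u)| \le K$ and the lower bound $H(p) \ge C_H^{-1}|p|^\gamma - C_H$ in \eqref{Hass} one gets the pointwise inequality $C_H^{-1}|\nabla u|^\gamma \le K + C_H + \Delta u$ in $\Omega$. Multiplying by $\xi^\beta$ with $\beta \ge \gamma'$, integrating over $B_2$ and integrating by parts $\int \xi^\beta \Delta u = -\beta\int \xi^{\beta-1}\nabla\xi\cdot\nabla u$, the resulting factor $\int \xi^{\beta-1}|\nabla u|$ is absorbed via Young's inequality after writing $\xi^{\beta-1}|\nabla u| = (\xi^\beta|\nabla u|^\gamma)^{1/\gamma}\,\xi^{\beta/\gamma'-1}$: the choice $\beta \ge \gamma'$ is exactly what keeps every power of $\xi$ nonnegative, hence $\le 1$. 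Absorbing a small multiple of $\int\xi^\beta|\nabla u|^\gamma$ on the left yields $\|\nabla u\|_{L^\gamma(B_1)} \le C$ with $C$ depending only on $B_1$, $B_2$, $K$, $C_H$ and $\gamma$. The absorption is legitimate because $\int\xi^\beta|\nabla u|^\gamma < \infty$ a priori, $u$ being $C^2$ and $\xi$ compactly supported.

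It remains to bootstrap. From the equation and \eqref{Hass}, $|\Delta u| \le C_H|\nabla u|^\gamma + C_H + K$, so $\nabla u \in L^r_{loc}$ implies $\Delta u \in L^{r/\gamma}_{loc}$; interior $W^{2,p}$ estimates then give $\nabla u \in W^{1,r/\gamma}_{loc}$, hence $\nabla u \in L^{(r/\gamma)^*}_{loc}$ by Sobolev embedding, and one iterates over a nested sequence of subdomains shrinking to $\Omega'$. The dependence of the $W^{2,p}$ estimate on a norm of $u$ is removed by subtracting the mean of $u$ over the current subdomain and invoking Poincaré together with the gradient bound already obtained. This iteration reaches every $L^q$ (indeed $L^\infty$) directly when $\gamma < N/(N-1)$; for general $\gamma > 1$ the crude scheme stalls — this gap is the main obstacle — and one must instead invoke the classical interior gradient regularity for viscous Hamilton--Jacobi equations with coercive Hamiltonian (Bernstein-type estimates). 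The delicate point there is that \eqref{Hass} gives no control on $D^2 H$, and for $\gamma < 2$ the map $\nabla H$ is merely Hölder near the origin, so that argument is carried out via a regularization of $H$ and a limiting procedure; the estimates of the core step, being uniform and independent of $u$, are what make such a passage to the limit go through.
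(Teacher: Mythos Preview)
Your proposal is candid about its own limits: you yourself note that the bootstrap stalls for general $\gamma>1$ and that one must ultimately invoke Bernstein-type interior gradient estimates for viscous Hamilton--Jacobi equations. That is exactly what the paper's proof consists of --- it gives no self-contained argument, but simply refers to \cite[Theorem~A.1]{lasry1989nonlinear} for the model Hamiltonian and to \cite{PV} for the extension to $H$ satisfying \eqref{Hass}. So in the end your argument and the paper's coincide: both defer the substantive estimate to the literature on the Bernstein method.

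Two comments on the extra material you supply. The elementary cut-off computation giving $\nabla u\in L^\gamma_{loc}$ is correct, but it is not part of the cited Bernstein argument and is not needed once one invokes those references. As for the bootstrap, there is a technical snag even in the range $\gamma<N/(N-1)$ you single out: starting from $\nabla u\in L^\gamma_{loc}$ only yields $\Delta u\in L^1_{loc}$, and Calder\'on--Zygmund fails at the endpoint $p=1$, so the iteration does not launch as written (one would have to pass through weak-$L^1$ or Lorentz spaces). This is fixable, but since you fall back on Bernstein anyway, the bootstrap is superfluous here. Finally, your remark that ``\eqref{Hass} gives no control on $D^2H$'' is slightly off: the paper assumes $H\in C^2(\RsetN\setminus\{0\})$ in addition to \eqref{Hass}, and the Bernstein arguments in \cite{lasry1989nonlinear, PV} make use of this qualitative regularity together with the structural third condition in \eqref{Hass}.
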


\begin{proof} This estimate, which relies on the Bernstein method, has been proved in \cite[Theorem A.1]{lasry1989nonlinear} when $H$ is of the form \eqref{Hf_partic}. If one looks carefully at the proof, it is possible to carry out the same procedure also for perturbations of that Hamiltonians, namely when $H$ satisfies \eqref{Hass}. A detailed proof in this case can be found in \cite{PV}, where the HJB equation is set on the flat torus $\Tn$. \end{proof}

In the following proposition we prove the inequalities \eqref{EI1A}-\eqref{EI2A}, by a delicate combination of Sobolev embeddings and elliptic regularity. The idea of the first part of the proof comes from \cite{Metafune}; here we compute explicitly some key exponents. As for \eqref{EI2A}, it follows by an appropriate use of the Gagliardo-Nirenberg inequality.

\begin{prop}\label{prop_energy} Suppose that $m \in W^{1,2}(\Tn)$, $A \in L^{\infty}(\Tn)$ solve (weakly\footnote{that is: $\intTn \nabla m \cdot \nabla \varphi - m A \cdot \nabla \varphi = 0$ for all $\varphi \in C^{\infty}(\Tn)$.})
\begin{equation}\label{kolmo}
- \Delta m(x) + {\rm div}(A(x) \, m(x)) = 0 \quad \text{in $\Tn$}, \qquad \int_\Tn m \, dx = 1.
\end{equation}

Then, for all $\beta > 1$ such that
\begin{equation}\label{beta}
\beta < \begin{cases}
1 + \gamma'/(N-\gamma') & \text{if $\gamma' < N$} \\
\infty & \text{if $\gamma' \ge N$,}
\end{cases}
\end{equation}
there exists $C > 0$ and $\delta > 1$, depending on $\beta$, $N$ and $\gamma'$ such that
\begin{equation}\label{EI1A}
\|m\|^\delta_{L^\beta} \le C\left(\intTn |A|^{\gamma'} m  \, dx  + 1\right).
\end{equation}

Moreover, for all $1 < \beta < 1 + \gamma'/N$ it holds true that
\begin{equation}\label{EI2A}
\|m\|^{\delta \beta}_{L^\beta} \le C\left(\intTn |A|^{\gamma'} m \, dx  + 1\right).
\end{equation}

\end{prop}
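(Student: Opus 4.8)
The plan is to derive both inequalities from the Kolmogorov equation \eqref{kolmo} by testing against powers of $m$, then upgrading the resulting differential inequality via Sobolev and Gagliardo–Nirenberg embeddings, with the $L^1$ constraint $\intTn m = 1$ playing a crucial role to close the estimate. First I would use $\varphi = m^{2s-1}$ (for a suitable exponent $s>1$ to be calibrated) as a test function — or rather an appropriate truncation thereof, since we only know $m \in W^{1,2}$, justifying the manipulation by approximation and the fact that $m$ is bounded (or by first working with $m_R = \min(m,R)$ and letting $R\to\infty$). Plugging in gives, schematically,
\[
\intTn |\nabla (m^s)|^2 \, dx \le C_s \intTn |A| \, m^{2s-1} \, |\nabla m| \, dx = C_s \intTn |A| \, m^{s} \, |\nabla (m^{s})| \, dx,
\]
after writing $m^{2s-1}|\nabla m| = \tfrac1s m^s |\nabla(m^s)|$. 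Applying Young's inequality with weight $m$ (i.e. Hölder in the form $|A| m^s |\nabla(m^s)| \le \varepsilon |\nabla(m^s)|^2 + C_\varepsilon |A|^{2} m^{2s}$, but more cleverly keeping one factor of $m$ to the first power so that $|A|^{\gamma'} m$ appears) is the delicate point: one wants to produce on the right-hand side exactly $\intTn |A|^{\gamma'} m \, dx$ times a power of $\|m\|_{L^\beta}$, which forces a specific relationship between $s$, $\beta$, $\gamma'$ and $N$. This is the ``explicit computation of key exponents'' the authors allude to, following \cite{Metafune}.

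The key step is then: the absorbed left-hand side controls $\|m^s\|_{W^{1,2}}$, hence by Sobolev embedding $\|m^s\|_{L^{2^*}}$ where $2^* = 2N/(N-2)$ (or any $L^q$ if $N \le 2$), i.e. $\|m\|_{L^{2^* s}}$. Meanwhile the right-hand side, after Hölder splitting $|A|^{\gamma'-?} m^{?}$ and using the $L^1$ mass bound together with interpolation, ends up bounded by $\bigl(\intTn |A|^{\gamma'} m + 1\bigr)$ times $\|m\|_{L^\beta}^{\theta}$ for some $\theta < \delta$ where $\delta$ relates $2^* s$ to $\beta$; rearranging and choosing $\beta = 2^* s$ (equivalently, solving for the largest admissible $\beta$) yields \eqref{EI1A}, with the constraint $\beta < 1 + \gamma'/(N-\gamma')$ emerging precisely as the condition that the exponent bookkeeping is consistent (this is where the first critical exponent $\alpha_2$ enters). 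I would track carefully that the $|\nabla m|$ appearing must be paired with a factor $m^{1/\gamma'}$ (from $m = m^{1/\gamma'} \cdot m^{1/\gamma}$) so that $|A| m^{1/\gamma'}$ can be Hölder-ed against $|\nabla m|\, m^{-1/\gamma'} \sim |\nabla(m^s)| m^{s-1-1/\gamma'}$ with the remaining $m$-power reorganized via the $L^\beta$ norm.

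For \eqref{EI2A}, instead of (or in addition to) Sobolev I would invoke the Gagliardo–Nirenberg interpolation inequality
\[
\|w\|_{L^q} \le C \|\nabla w\|_{L^2}^{a} \|w\|_{L^r}^{1-a}, \qquad \tfrac1q = a\bigl(\tfrac12 - \tfrac1N\bigr) + (1-a)\tfrac1r,
\]
applied to $w = m^s$ with $r$ chosen so that $r s = 1$, i.e. the low norm becomes $\|m\|_{L^1} = 1$ and drops out entirely. Then $\|m^s\|_{L^q}$ is controlled purely by $\|\nabla(m^s)\|_{L^2}^a$, which by the first part is controlled by a power of $\bigl(\intTn |A|^{\gamma'} m + 1\bigr)$ with no residual $\|m\|_{L^\beta}$ factor to absorb — this is exactly what lets the right-hand side of \eqref{EI2A} be free of $m$, at the cost of the stronger restriction $\beta < 1 + \gamma'/N$ (the second critical exponent $\alpha_1$), which is the range of $\beta = qs$ for which the GN exponent $a$ and the absorption in the first step are simultaneously valid. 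The main obstacle throughout is the exponent calibration: ensuring that the single power of $m$ weighting $|A|^{\gamma'}$ survives all the Hölder and Young splittings, and that the resulting self-improving inequality $\|m\|_{L^\beta}^\delta \le C(\cdots) + \text{(lower order in } \|m\|_{L^\beta})$ actually closes — i.e. that the power of $\|m\|_{L^\beta}$ on the right is strictly smaller than $\delta$ — which is precisely what fails at the critical exponents and thus pins down the ranges \eqref{beta} and $\beta < 1+\gamma'/N$.
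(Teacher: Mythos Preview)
Your approach is genuinely different from the paper's. You propose the energy/Moser route: test the Kolmogorov equation against $m^{2s-1}$, obtain an $L^2$ gradient bound on $m^s$, and then upgrade via Sobolev or Gagliardo--Nirenberg. The paper instead never tests with powers of $m$. It works in duality form: from the weak equation one has
\[
\Bigl|\intTn m\,\Delta\varphi\Bigr| \le \intTn |A|\,m^{1/\gamma'}\cdot m^{1/\gamma}\,|\nabla\varphi|
\le E^{1/\gamma'}\,\|m\|_{L^\beta}^{1/\gamma}\,\|\nabla\varphi\|_{L^{r'}},
\]
with $E=\intTn|A|^{\gamma'}m$ and $r$ determined by $\tfrac1r=\tfrac1{\gamma'}+\tfrac1{\gamma\beta}$. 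Then Agmon's elliptic $W^{1,r}$ estimate (Proposition~\ref{ell_regularity}) gives $\|m\|_{W^{1,r}}\le C\|m\|_{L^\beta}^{1/\gamma}(E^{1/\gamma'}+1)$; Sobolev embedding (or Gagliardo--Nirenberg with the $L^1$ norm as low endpoint, for the second inequality) followed by interpolation between $L^1$ and $L^\eta$ closes the loop. The exponent bookkeeping is short: one only checks $\eta>\beta$, which is exactly where the thresholds $1+\gamma'/(N-\gamma')$ and $1+\gamma'/N$ appear.

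The main advantage of the paper's route is that it handles all $\gamma'>1$ uniformly, because the elliptic regularity exponent $r$ is tuned to $\gamma'$ from the start. Your $L^2$-based energy estimate on $m^s$ is natural when $\gamma'\ge 2$: then $|A|\,m^{1/\gamma'}\cdot m^{s-1/\gamma'}\,|\nabla(m^s)|$ splits by H\"older with exponents $(\gamma',\,2\gamma'/(\gamma'-2),\,2)$, and the last factor is absorbed into the left-hand side. But for $\gamma'<2$ (i.e.\ $\gamma>2$) this three-way split has a negative middle exponent, and the alternative of applying Young with exponents $(\gamma',\gamma)$ to isolate $E$ leaves you with $|\nabla(m^s)|^{\gamma}$ on the right, which is \emph{higher} order than the $|\nabla(m^s)|^2$ you control. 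So the ``delicate point'' you flag is a real obstruction in that regime, not just calibration: you would need either to test with a different power tailored to $\gamma'$ (producing a $W^{1,r}$-type estimate on $m^{s'}$ for some $r\neq 2$) or to iterate, and at that point you are essentially re-deriving the elliptic regularity the paper invokes directly. Your Gagliardo--Nirenberg idea for \eqref{EI2A}, using $\|m^s\|_{L^{1/s}}=\|m\|_{L^1}^s=1$ as the low norm, is correct in spirit and parallels the paper's use of $\|m\|_{L^1}$ as the low endpoint; the difference there is cosmetic.
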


\begin{proof} Set
\begin{equation}\label{Edef}
E := \intRn |A|^{\gamma'} m \, dx.
\end{equation}
The inequality \eqref{EI1A} will be proved firstly. One observes that $m$ solves \eqref{kolmo}, so
\[
\left| \intTn m \, \Delta \varphi \, dx \right| \le \intTn |A| m^{1/\gamma'} \, m^{1- 1/\gamma'} \, |\nabla \varphi| \, dx,
\]
and an application of the Holder inequality provides
\[
\left| \intTn m \, \Delta \varphi \, dx \right| \le E^{1/\gamma'}\|m\|^{1/\gamma}_{L^{\beta}} \|\nabla \varphi\|_{L^{r'}},
\]
where the following equality holds
\[
\qquad \frac{1}{r} = \frac{1}{\gamma'} + \left(1 - \frac{1}{\gamma'}\right)\frac{1}{\beta}.
\]
In view of Proposition \ref{ell_regularity} one argues that
\[
\|m\|_{W^{1,r}} \le C_r (E^{1/\gamma'}\|m\|^{1/\gamma}_{L^{\beta}} + \|m\|_{L^r}).
\]
By interpolation $\|m\|_{L^r} \le \|m\|^{1/\gamma'}_{L^1} \|m\|^{1/\gamma}_{L^{\beta}}$, so
\begin{equation}\label{eq11}
\|m\|_{W^{1,r}} \le C_1 \|m\|^{1/\gamma}_{L^{\beta}} (E^{1/\gamma'} + 1),
\end{equation}
and by standard Sobolev embedding one has
\begin{equation}\label{eq12}
\|m\|_{L^{\eta}}  \le C_2 \|m\|^{1/\gamma}_{L^{\beta}} (E^{1/\gamma'} + 1),
\end{equation}
where
\[
\frac{1}{\eta} = \frac{1}{r} - \frac{1}{N} = \frac{1}{\gamma'} - \frac{1}{N} + \left(1 - \frac{1}{\gamma'}\right)\frac{1}{\beta}.
\]
Note that $1 < r < N$, if $\gamma' < N$. One verifies also that $\eta > \beta$, because $\beta < N/(N-\gamma')$. Therefore, again by interpolating between $L^1$ and $L^{\eta}$ there exists $0 < \theta < 1$ such that
$\|m\|_{L^{\beta}} \le \|m\|_{L^{\eta}}^\theta$, and it follows that
\[
\|m\|^{1/\theta}_{L^{\beta}}  \le C_2 \|m\|^{1- 1/\gamma'}_{L^{\beta}} (E^{1/\gamma'} + 1).
\]
Hence one has \eqref{EI1A} by setting $\delta = 1 + (1/\theta-1) \gamma'$.

If $\gamma' \ge N$, then \eqref{eq12} is satisfied for all $\eta \ge 1$, so the claimed inequality \eqref{EI1} again follows by interpolation between $L^1$ and $L^{\eta}$, with $\eta$ large enough.

In order to prove \eqref{EI2A}, the Gagliardo-Nirenberg inequality will be used instead of the Sobolev inequality. In particular, one has
\[
\|m\|_{L^{\eta}}  \le C_3 ( \| \nabla m \|_{L^r}^{N/(N+1)} \| m \|_{L^1}^{1/(N+1)} + \| m \|_{L^r}^{N/(N+1)}),
\]
where
\[
\frac{1}{\eta} = \left(\frac{1}{r} - \frac{1}{N}\right) \frac{N}{N+1} + 1 -  \frac{N}{N+1} = \frac{N}{N+1}\left[\frac{1}{\gamma'} + \left(1 - \frac{1}{\gamma'}\right)\frac{1}{\beta}\right],
\]
so, by plugging \eqref{eq11} into the last inequality,
\[
\|m\|^{(N+1)/N}_{L^{\eta}}  \le C_4 \|m\|^{1/\gamma}_{L^{\beta}} (E^{1/\gamma'} + 1).
\]
It is now crucial to observe that $\eta > \beta$ because $\beta < 1 + \gamma'/N$. Arguing by interpolation as before one has
\[
\|m\|^{(N+1)/(\theta N)}_{L^{\beta}}  \le C_5 \|m\|^{1- 1/\gamma'}_{L^{\beta}} (E^{1/\gamma'} + 1),
\]
which implies
\[
\|m\|^{1 + \gamma' (N+1)/(\theta N) - \gamma'}_{L^{\beta}}  \le C_6(E + 1).
\]
The inequality \eqref{EI2} then follows because $1 + \gamma' (N+1)/(\theta N) - \gamma' \ge \gamma'/N + 1 > \beta$.

\end{proof}

\begin{cor}\label{cor_energy} Suppose that $(u, m) \in C^2(\Tn) \times W^{1,2}(\Tn)$ solves the Kolmogorov equation in \eqref{MFG}. Then, for all $\beta$ satisfying \eqref{beta},
\begin{equation}\label{EI1}
\|m\|^\delta_{L^\beta} \le C\left(\intTn |\nabla u|^\gamma m \, dx  + 1\right),
\end{equation}
and for all $1 < \beta < 1 + \gamma'/N$ it also holds true that
\begin{equation}\label{EI2}
\|m\|^{\delta \beta}_{L^\beta} \le C\left(\intTn |\nabla u|^\gamma m \, dx  + 1\right).
\end{equation}
\end{cor}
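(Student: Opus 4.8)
The plan is to deduce Corollary \ref{cor_energy} from Proposition \ref{prop_energy} by checking that the Kolmogorov equation in \eqref{MFG} fits the abstract framework of that proposition, with the drift field $A := \nabla H(\nabla u)$. First I would observe that since $u \in C^2(\Tn)$ and $\Tn$ is compact, $\nabla u$ is bounded, hence by the growth bound $|\nabla H(p)| \le C_H(|p|^{\gamma-1}+1)$ in \eqref{Hass} we get $A \in L^\infty(\Tn)$; likewise $m \in W^{1,2}(\Tn)$ by hypothesis, and the Kolmogorov equation is exactly \eqref{kolmo} with this $A$. Therefore Proposition \ref{prop_energy} applies verbatim and yields, for $\beta$ as in \eqref{beta},
\[
\|m\|_{L^\beta}^\delta \le C\left(\intTn |A|^{\gamma'} m \, dx + 1\right),
\]
and the analogous inequality \eqref{EI2A} for $1 < \beta < 1 + \gamma'/N$.

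The only real point to verify is that the right-hand side can be controlled by $\intTn |\nabla u|^\gamma m\,dx + 1$. This is a pointwise estimate on the integrand: using $|A| = |\nabla H(\nabla u)| \le C_H(|\nabla u|^{\gamma-1}+1)$ and the elementary inequality $(a+b)^{\gamma'} \le 2^{\gamma'-1}(a^{\gamma'}+b^{\gamma'})$, one gets
\[
|A|^{\gamma'} \le C_H^{\gamma'} 2^{\gamma'-1}\left(|\nabla u|^{(\gamma-1)\gamma'} + 1\right) = C\left(|\nabla u|^\gamma + 1\right),
\]
since $(\gamma-1)\gamma' = \gamma$ by definition of the conjugate exponent. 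Multiplying by $m \ge 0$ and integrating over $\Tn$, and using $\intTn m\,dx = 1$, gives
\[
\intTn |A|^{\gamma'} m \, dx \le C\intTn |\nabla u|^\gamma m \, dx + C,
\]
so that $\intTn |A|^{\gamma'} m\,dx + 1 \le C'(\intTn |\nabla u|^\gamma m\,dx + 1)$. Substituting this into the two inequalities from Proposition \ref{prop_energy} and relabelling the constant yields \eqref{EI1} and \eqref{EI2}.

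There is essentially no obstacle here — the corollary is a routine specialization. The one thing to be a little careful about is the regularity needed to invoke Proposition \ref{prop_energy}: one must check that $(u,m)$ solving the Kolmogorov equation in the stated sense ($C^2 \times W^{1,2}$) does give a \emph{weak} solution in the sense of the footnote to Proposition \ref{prop_energy}, i.e. that $\intTn \nabla m \cdot \nabla\varphi - m\,\nabla H(\nabla u)\cdot\nabla\varphi = 0$ for all $\varphi \in C^\infty(\Tn)$; this is immediate by integration by parts once we know $m \in W^{1,2}$ and $\nabla H(\nabla u) m \in L^2$, both of which hold since $\nabla H(\nabla u) \in L^\infty$. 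Everything else is the pointwise algebra above together with the identity $(\gamma-1)\gamma' = \gamma$.
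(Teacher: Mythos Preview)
Your proof is correct and is exactly the paper's approach: apply Proposition~\ref{prop_energy} with the drift from the Kolmogorov equation and bound $|A|^{\gamma'}$ by $C(|\nabla u|^\gamma+1)$ via \eqref{Hass} and $(\gamma-1)\gamma'=\gamma$. One inconsequential slip: comparing the Kolmogorov equation in \eqref{MFG} with \eqref{kolmo} gives $A=-\nabla H(\nabla u)$ rather than $+\nabla H(\nabla u)$, but since only $|A|$ enters the estimates this does not affect anything.
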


The corollary clearly follows from \eqref{EI1A}, \eqref{EI2A}, with $A = -\nabla H(\nabla u)$ and the assumptions on the Hamiltonian \eqref{Hass} ($C$ will also depend on $C_H$).

\begin{rem}\label{rem_energy} If one goes back to the proof of Proposition \ref{prop_energy}, boundedness of $\int |A|^{\gamma'} m$ implies uniform boundedness of $\|m\|_{L^\beta}$ for all $\beta$ satisfying \eqref{beta}, but also that any solution of the Kolmogorov equation $m_k$ such that $\int |A_k|^{\gamma'} m_k \le C$ converges (up to subsequences) in $L^\beta(\Tn)$. This is true because Sobolev embeddings are compact (see \eqref{eq11}-\eqref{eq12}).
\end{rem}

We are now ready to prove the main existence result, which will be obtained through a fixed point/approximation scheme. Several lemmas will provide the required regularity of the approximating problem, and will justify the limiting procedure.

\begin{proof}[Proof of Theorem \ref{thm_existence}]

Let $k$ be a positive integer, $\psi$ be a radial mollifier (i.e. $\psi \ge 0$ and $\intRn \psi = 1$) and
\[
\psi_k := k^N \psi(kx), \quad \forall k \ge 1, x \in \RsetN.
\]
Of course $\intRn \psi_k = 1$ for all $k$. Let $(u_k, \lambda_k, m_k) \in C^2(\Tn) \times \Rset \times W^{1,p}(\Tn)$ be the defined by
\begin{equation}\label{mkdef}
\begin{cases}
- \Delta u_k + H(\nabla u_k) + \lambda_k = V(x) - f(m_{k} \star \psi_k(x))  \\
- \Delta m_k -{\rm div}(\nabla H(\nabla u_k) \, m_k) = 0  & \text{in $\Tn$},  \\
\intTn m \, dx = 1, \, m > 0,
\end{cases}
\end{equation}
where $\star$ is the standard convolution operator.

\begin{lem}\label{lemma_welldef} If $\alpha < \gamma'/N$,
\begin{align}
\bullet \quad &\text{the triple $(u_k, \lambda_k, m_k)$ is well-defined for all $k \in \mathbb{N}$,} \label{welldef} \\
\bullet \quad & \text{$\exists C > 0$ s.t. $\forall k \ge 1$,} \quad |\lambda_k| \le C, \quad \intTn m_k^{\alpha+1} \, dx \le C, \quad \intTn |\nabla u_k|^{\gamma} m_k \, dx \le C. \label{energybound}
\end{align}

Otherwise, if $\alpha \ge \gamma'/N$, then \eqref{welldef} and \eqref{energybound} hold under the additional condition that $C_f$ is small enough.
\end{lem}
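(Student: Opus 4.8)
The plan is to prove \eqref{welldef} by a Schauder fixed‑point argument — for each fixed $k$ the convolution turns the coupling into a harmless, uniformly bounded datum — and to prove \eqref{energybound} from the MFG energy identity combined with Corollary \ref{cor_energy}, the threshold $\gamma'/N$ entering precisely when the latter ceases to be strong enough to close the estimate unconditionally.

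For \eqref{welldef}, fix $k$ and work on the convex set $\mathcal K=\{\mu\in C(\Tn):\ \mu\ge 0,\ \intTn\mu=1\}$. For $\mu\in\mathcal K$ the function $\mu\star\psi_k$ is smooth and bounded in every $C^j(\Tn)$ by a constant depending only on $k$ and $\psi$ (because $\|\mu\star\psi_k\|_{C^j}\le\|\psi_k\|_{C^j}\|\mu\|_{L^1}$), so $g:=V-f(\mu\star\psi_k)$ is continuous with $\|g\|_{L^\infty}\le K_k$, with $K_k$ independent of $\mu$. The ergodic problem $-\Delta u+H(\nabla u)+\lambda=g$ on $\Tn$ has a unique $\lambda=\lambda[g]\in\Rset$ and a solution $u$, unique up to additive constants; Proposition \ref{hjb_regularity} together with an elliptic bootstrap bounds $\|\nabla u\|_{L^\infty}$ and then $\|u\|_{C^2}$ in terms of $K_k$. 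Hence $b:=\nabla H(\nabla u)\in L^\infty(\Tn)$, and the Kolmogorov equation $-\Delta m-\mathrm{div}(b\,m)=0$, $\intTn m=1$, has a unique positive solution $m\in W^{1,p}(\Tn)$ for all $p$ (existence and uniqueness of the invariant density; Proposition \ref{ell_regularity} plus a bootstrap for regularity; Harnack for positivity). Setting $\mathcal T(\mu):=m$, one checks that $\mathcal T$ maps $\mathcal K$ into itself, that $\mathcal T(\mathcal K)$ is bounded in $W^{1,p}(\Tn)$ with $p>N$ (uniformly in $\mu$, since $K_k$ is), hence precompact in $C(\Tn)$, and that $\mathcal T$ is continuous (if $\mu_j\to\mu$ in $C(\Tn)$ then $\mu_j\star\psi_k\to\mu\star\psi_k$ in every $C^j$, so $g_j\to g$, $(u_j,\lambda_j)\to(u,\lambda)$ by stability of the ergodic problem, $b_j\to b$, and $\mathcal T(\mu_j)\to\mathcal T(\mu)$). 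Schauder's theorem then yields $m_k=\mathcal T(m_k)\in\mathcal K$, and the associated triple solves \eqref{mkdef} with the stated regularity.

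For \eqref{energybound}, test the first equation of \eqref{mkdef} against $m_k$ and the second against $u_k$, subtract, and use $\intTn m_k=1$ to obtain the energy identity
\[
\lambda_k+\intTn f(m_k\star\psi_k)\,m_k\,dx=\intTn V m_k\,dx+\intTn\bigl(\nabla H(\nabla u_k)\cdot\nabla u_k-H(\nabla u_k)\bigr)m_k\,dx.
\]
Write $E_k:=\intTn|\nabla u_k|^\gamma m_k\,dx$, $G_k:=\intTn f(m_k\star\psi_k)m_k\,dx\ge 0$, and let $D_k$ be the last integral; by \eqref{Hass}, $C_H^{-1}E_k-C_H\le D_k\le C(E_k+1)$. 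Integrating the HJB equation over $\Tn$ and using $f,V\ge0$ together with $H(p)\ge C_H^{-1}|p|^\gamma-C_H$ gives $\lambda_k\le C_V+C_H$ at once. By Young's convolution inequality $\|m_k\star\psi_k\|_{L^{\alpha+1}}\le\|m_k\|_{L^{\alpha+1}}$, so \eqref{fass} and Hölder give $G_k\le C_f\|m_k\|_{L^{\alpha+1}}^{\alpha+1}+C_f$. If $\alpha<\gamma'/N$ then $\alpha+1<1+\gamma'/N$, and \eqref{EI2} (applicable since $m_k\in W^{1,2}$) gives $\|m_k\|_{L^{\alpha+1}}^{\alpha+1}\le C(E_k+1)^{1/\delta}$ with $\delta>1$, hence $G_k\le C(E_k+1)^{1/\delta}+C$, sublinear in $E_k$. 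Writing the identity as $C_H^{-1}E_k-C_H\le D_k=\lambda_k+G_k-\intTn V m_k\le\lambda_k+G_k$ and inserting $\lambda_k\le C_V+C_H$ and the bound on $G_k$ gives $E_k\le C+C(E_k+1)^{1/\delta}$, which forces $E_k\le C$. Then \eqref{EI1} with $\beta=\alpha+1$ yields $\intTn m_k^{\alpha+1}\le C$ and $G_k\le C$, and the identity read backwards, $\lambda_k=\intTn V m_k+D_k-G_k\ge-C_H-G_k\ge-C$, closes the lower bound on $\lambda_k$. When instead $\gamma'/N\le\alpha<\gamma'/(N-\gamma')$ — i.e. in the range \eqref{critical_alpha} — only the weaker \eqref{EI1} is available, and after an appropriate Hölder interpolation it yields a bound $G_k\le C_f\,C\,(E_k+1)$ that is merely linear in $E_k$, with constant proportional to $C_f$; the same chain of implications then closes provided $C_f$ is small enough that $C_f\,C<C_H^{-1}$.

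The crux — and the only place where $\gamma'/N$ and $\gamma'/(N-\gamma')$ intervene — is the step closing the energy identity for $E_k$. It works because $G_k$ sits on the opposite side of the coercive term $D_k$, so it competes with $E_k$ rather than reinforcing it, and because below $\gamma'/N$ the Gagliardo–Nirenberg analysis of Proposition \ref{prop_energy} makes the growth of $G_k$ in $E_k$ genuinely sublinear, hence absorbable unconditionally. Past $\gamma'/N$ one loses sublinearity and must pay with smallness of $C_f$; past $\gamma'/(N-\gamma')$ even \eqref{EI1} can no longer be applied to $m_k^{\alpha+1}$, which is exactly the obstruction delimiting \eqref{critical_alpha}. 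I expect this closing step — in particular checking that the interpolation exponents in the range $[\gamma'/N,\gamma'/(N-\gamma'))$ are admissible and produce at worst a linear rate — to be the main technical point; the fixed‑point construction is routine once the convolution is in place.
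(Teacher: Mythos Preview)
Your fixed-point argument for \eqref{welldef} is correct and in fact cleaner than the paper's: by exploiting that $\|\mu\star\psi_k\|_{L^\infty}\le\|\psi_k\|_{L^\infty}$ for $\mu\in\mathcal K$, you get $k$-dependent but $\mu$-uniform bounds that make the Schauder map compact on the whole probability simplex, with no energy input whatsoever. Your closure of \eqref{energybound} in the range $\alpha<\gamma'/N$ via \eqref{EI2} is also correct and matches the paper's mechanism.

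The gap is in the range $\gamma'/N\le\alpha<\gamma'/(N-\gamma')$. Your claim that \eqref{EI1} plus interpolation yields $G_k\le C_f\,C\,(E_k+1)$ with a \emph{linear} rate is not true in general. From \eqref{EI1} one only gets $\|m_k\|_{L^{\alpha+1}}^{\alpha+1}\le C(E_k+1)^{(\alpha+1)/\delta}$, and $(\alpha+1)/\delta$ can exceed $1$: for instance with $N=3$, $\gamma=\gamma'=2$, $\alpha=1$, the proof of Proposition~\ref{prop_energy} gives $\delta=4/3$, so the exponent is $3/2$. Interpolating against a larger $\beta$ does not help, since $\delta(\beta)\to 1$ as $\beta$ approaches the Sobolev endpoint. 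With a superlinear rate the inequality you arrive at has the form $X^\delta\le C+C'C_f\,X^{\alpha+1}$ with $\delta<\alpha+1$; this is satisfied both by small $X$ and by arbitrarily large $X$, so smallness of $C_f$ alone does not select the bounded branch. Since your Schauder argument produces \emph{some} fixed point without any a priori control on $\int m_k^{\alpha+1}$, nothing rules out landing on the large branch.

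The paper avoids this by coupling existence and the energy bound: it runs Schauder on the restricted set $\mathcal K_\xi=\{\mu:\int\mu^{\alpha+1}\le\xi\}$ and shows, for a suitable $\xi$ (chosen as a root of $\xi^\delta=C(C_f\xi^{\alpha+1}+1)$, which exists precisely when $C_f$ is small), that $\mathcal F(\mathcal K_\xi)\subset\mathcal K_\xi$. The fixed point then carries the bound $\int m_k^{\alpha+1}\le\xi$ by construction, uniformly in $k$, and the remaining estimates follow. In other words, in Case~2 the constraint has to be built into the fixed-point set rather than recovered afterwards; your decoupled strategy works only below $\gamma'/N$.
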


\begin{proof} In order to prove that a solution $(u_k, \lambda_k, m_k)$ of \eqref{mkdef} exists, one looks for fixed points of the map $\F = \F_k : \mu \mapsto m$, defined by $\mu \mapsto (v, \lambda) \mapsto m$, where $(v, \lambda)$ is a solution of
\begin{equation}\label{FHJB}
- \Delta v + H(\nabla v) + \lambda = V(x) - f(\mu \star \psi_k), \quad \text{in $\Tn$},
\end{equation}
and $m$ is the invariant distribution solving
\begin{equation}\label{FKolmo}
- \Delta m -{\rm div}(\nabla H(\nabla v) \, m) = 0 , \quad \text{in $\Tn$}, \quad \intTn m \, dx = 1, \, m > 0.
\end{equation}

Let
\[
\mathcal{K}_\xi := \left\{ m \in W^{1,p}(\Tn) : \intTn m \, dx = 1, \, \intTn m^{\alpha + 1} \, dx \le \xi, \, m \ge 0 \right\},
\]
for $\xi > 0$ ($p > N$ is chosen). The fact that $\F$ is well-defined, continuous and compact on $C(\Tn) \supset \mathcal{K}_\xi$ is standard (see, for example, \cite{MR3333058}). Moreover, one has the following

\begin{lem} If $\alpha < \gamma'/N$, then there exists $\xi > 0$ such that $\F$ maps $\mathcal{K}_\xi$ into itself. Otherwise, if $\alpha \ge \gamma'/N$, the existence of $\xi$ such that $\F$ maps $\mathcal{K}_\xi$ into itself holds under the additional condition that $C_f$ is small enough.
\end{lem} 

\begin{proof} Let $\mu \in \mathcal{K}_\xi$, and $v, \lambda, m$ be defined above. Firstly, $\lambda$ is bounded from above, uniformly with respect to $\mu$ and $k$. Indeed, integrating the HJB equation \eqref{FHJB} on $\Tn$,
\[
-C_H + \lambda \le \intTn H(\nabla v) \, dx + \lambda = \intTn V(x) \, dx - \intTn f(\mu \star \psi_k(x)) \, dx \le C_V,
\]
by \eqref{Hass} and \eqref{Vass1}.

In order to prove that $\int m^{\alpha + 1} \, dx \le \xi$, one starts by multiplying \eqref{FHJB} by $m$, and \eqref{FKolmo} by $v$; integrating by parts leads to
\[
\intTn (\nabla H(\nabla v) \cdot \nabla v -H(\nabla v) ) m \, dx + \intTn V \, m \, dx = \lambda + \intTn f(\mu \star \psi_k) m \, dx.
\]
In view of \eqref{Hass} and \eqref{fass}, it holds true that
\begin{multline}\label{eq20}
C_H^{-1} \intTn |\nabla v|^\gamma m \, dx + \intTn V \, m \, dx \le \\ \lambda +  C_H + \intTn f(\mu \star \psi_k) m \, dx \le C_1 + C_f \|(\mu \star \psi_k)^\alpha m\|_{L^1},
\end{multline}
hence by Holder inequality and standard properties of the convolution
\begin{equation}\label{eq21}
 \intTn |\nabla v|^\gamma m \, dx \le C_2 + C_H C_f \|\mu\|^\alpha_{L^{\alpha + 1}} \|m\|_{L^{\alpha+1}}.
\end{equation}

\underline{Case 1}: $\alpha < \gamma'/N$. The inequality \eqref{EI2} applies, and one has
\[
\|m\|^{\delta(\alpha + 1)}_{L^{\alpha +1}} \le C_3(\| \mu \|^\alpha_{L^{\alpha + 1}} \|m\|_{L^{\alpha+1}} + 1),
\]
that is
\begin{equation}\label{eq22}
\|m\|^{\delta'}_{L^{\alpha +1}} \le C_4(\|\mu\|_{L^{\alpha + 1}} + 1),
\end{equation}
where $\delta' = (\delta(\alpha + 1)-1)/\alpha > 1$. This readily implies
\[
\intTn m^{\alpha + 1} \, dx\le C_5(\xi^{1/\delta'} + 1) \le \xi,
\]
provided that $\xi$ is large enough.


\underline{Case 2}: $\alpha\ge\gamma'/N $. Since \eqref{EI2} is not available anymore, one has to exploit \eqref{EI1}, that is
\begin{equation}\label{eq23}
\|m\|^{\delta}_{L^{\alpha +1}} \le C_6(C_f \|\mu\|^\alpha_{L^{\alpha + 1}} \|m\|_{L^{\alpha+1}} + 1),
\end{equation}
hence
\[
\left(\intTn m^{\alpha + 1} \, dx\right)^\delta \, dx \le C_7\left(C_f \xi^\alpha \intTn m^{\alpha + 1}  \, dx + 1\right).
\]
In this case, one may argue that there exists $\xi$ (large, and depending on $C_7, \alpha, \delta$) such that
\[
\intTn m^{\alpha + 1} \, dx \le \xi,
\]
provided that $C_f$ is small enough. In particular, one  might choose $C_f, \xi$ such that $\xi$ is the smallest root of $\xi^\delta = C_7(C_f \xi^{\alpha+1} + 1)$. 

Note that, in both cases, $\F(\mathcal{K}_\xi) \subseteq \mathcal{K}_\xi$ for all $k$, because $\xi$ does not depend on $k$.

\end{proof}

{\it End of the proof of Lemma \ref{lemma_welldef}}. Since $\F$ is well-defined, continuous, compact and maps $\mathcal{K}_\xi$ into itself, the existence of a fixed point of $\F$, and therefore a solution of \eqref{mkdef}, follows by Schauder theorem.

As for the bounds \eqref{energybound}, we have already obtained that $\lambda_k$ is uniformly bounded from above. Note also that $m_k \in \mathcal{K}_\xi$, that is $\int {m_k}^{\alpha + 1} \, dx \le \xi$, hence \eqref{eq21} implies that
\[
 \intTn |\nabla u_k|^\gamma m_k \, dx \le C_2 + C_H C_f \|m_k\|^{\alpha+1}_{L^{\alpha + 1}} \le C_2 + C_H C_f \xi .
\]
It remains to bound from below $\lambda_k$; one may read \eqref{eq20} as
\[
- \lambda_k \le  C_H + C_f \|(m_{k} \star \psi_k)^\alpha m_k\|_{L^1} \le C_H + C_f \|m_{k}\|^\alpha_{L^{\alpha + 1}} \|m_k\|_{L^{\alpha+1}} \le C_H + C_f \xi,
\]
which proves the assertion.
\end{proof}

The aim is now to pass to the limit as $k \to \infty$, and prove that $(u_k, \lambda_k, m_k)$ converges to a solution of \eqref{MFG}.

\begin{lem}\label{lem_infty} If $\alpha < \gamma'/N$, there exists $C > 0$ such that
\begin{equation}\label{inftybound}
\|m_k\|_{L^\infty} \le C
\end{equation}
for all $k \ge 1$. Otherwise, if $\alpha \ge \gamma'/N$, then \eqref{inftybound} holds under the additional condition that $C_f$ is small enough.
\end{lem}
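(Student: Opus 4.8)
The plan is to argue by contradiction via a blow-up procedure calibrated on the scaling of \eqref{MFG}. Suppose $\|m_k\|_{L^\infty}$ is unbounded along a subsequence, and write $\mu_k := \|m_k\|_{L^\infty} = m_k(x_k) \to \infty$ for maximum points $x_k \in \Tn$ (which exist since $m_k$ is continuous). Put
\[
a := \frac{2-\gamma}{\gamma-1}, \qquad \rho_k := \mu_k^{-\alpha/\gamma'} \to 0,
\]
and define the rescaled pair
\[
\tilde m_k(x) := \mu_k^{-1}\, m_k(x_k + \rho_k x), \qquad \tilde u_k(x) := \rho_k^{a}\big(u_k(x_k + \rho_k x) - u_k(x_k)\big),
\]
periodic of period $\rho_k^{-1} \to \infty$; we regard them as defined on larger and larger balls. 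The exponents satisfy $a+1 = 1/(\gamma-1)$, $a+2 = (a+1)\gamma = \gamma'$ and $\rho_k^{\gamma'}\mu_k^\alpha = 1$, so a direct change of variables shows that $(\tilde u_k,\tilde m_k)$ solves
\[
\begin{cases}
-\Delta \tilde u_k + H_k(\nabla \tilde u_k) + \rho_k^{\gamma'}\lambda_k = \rho_k^{\gamma'}V(x_k+\rho_k x) - \rho_k^{\gamma'} f\big((m_k\star\psi_k)(x_k+\rho_k x)\big)\\
-\Delta \tilde m_k - {\rm div}\big(\nabla H_k(\nabla \tilde u_k)\,\tilde m_k\big) = 0,
\end{cases}
\]
where $H_k(p) := \rho_k^{\gamma'} H(\rho_k^{-1/(\gamma-1)}p)$; in particular the Kolmogorov equation is scale invariant. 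By construction $0 \le \tilde m_k \le 1$ and $\tilde m_k(0) = 1$, and from \eqref{Hass} each $H_k$ satisfies the three bounds in \eqref{Hass} with the \emph{same} constants $C_H,\gamma$ for all $k$ with $\rho_k \le 1$ (the additive constants only pick up harmless factors $\rho_k^{\gamma'} \le 1$).

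The next step is to obtain $k$-uniform local compactness of $(\tilde u_k,\tilde m_k)$. Since $\|m_k\star\psi_k\|_{L^\infty}\le\mu_k$ and $\rho_k^{\gamma'}\mu_k^\alpha = 1$, assumption \eqref{fass} gives $0 \le \rho_k^{\gamma'} f(m_k\star\psi_k) \le C_f(1+\rho_k^{\gamma'}) \le 2C_f$, and together with $\rho_k^{\gamma'}(|\lambda_k|+\|V\|_{L^\infty}) \to 0$ — here we use the a priori bound $|\lambda_k| \le C$ of Lemma \ref{lemma_welldef} and \eqref{Vass1} — this yields $\big| -\Delta \tilde u_k + H_k(\nabla \tilde u_k)\big| \le C$ on every fixed ball, uniformly in $k$. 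Proposition \ref{hjb_regularity}, which applies with $k$-independent constants, then gives $\|\nabla \tilde u_k\|_{L^q(B_R)} \le C(R,q)$ for all $q,R$; reinserting this into the HJB equation gives $\Delta \tilde u_k$ bounded in $L^q_{\loc}$ for all $q$, hence (interior $W^{2,q}$ estimates for $-\Delta$, together with the normalization $\tilde u_k(0)=0$) $\tilde u_k$ bounded in $W^{2,q}_{\loc}$, thus in $C^{1,\theta}_{\loc}$. So, along a subsequence, $\nabla \tilde u_k \to \nabla \tilde u$ locally uniformly, and the rescaled drift $A_k := \nabla H_k(\nabla \tilde u_k)$, which satisfies $|A_k| \le C_H(|\nabla \tilde u_k|^{\gamma-1}+\rho_k)$, is bounded in $L^\infty_{\loc}$. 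Applying Proposition \ref{ell_regularity} to the Kolmogorov equation for $\tilde m_k$ with $\|\tilde m_k\|_{L^\infty}\le1$ gives $\tilde m_k$ bounded in $W^{1,p}_{\loc}$ for all $p$, hence in $C^{0,\theta}_{\loc}$; along a further subsequence $\tilde m_k \to \tilde m$ locally uniformly, with $0\le\tilde m\le1$ and $\tilde m(0)=1$. (One could alternatively pass to the limit in the whole rescaled system and invoke a Liouville-type argument, but the following energy-scaling shortcut is cleaner.)

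The conclusion follows from a scaling identity for the energy. Changing variables $y = x_k+\rho_k x$ and using $\intTn m_k^{\alpha+1}\,dx \le C$ (again from Lemma \ref{lemma_welldef}),
\[
\int_{B_R}\tilde m_k^{\alpha+1}\,dx = \mu_k^{-(\alpha+1)}\rho_k^{-N}\int_{B_{\rho_k R}(x_k)} m_k^{\alpha+1}\,dy \le C\,\mu_k^{-(\alpha+1)}\rho_k^{-N} = C\,\mu_k^{(\alpha(N-\gamma')-\gamma')/\gamma'}
\]
for $k$ large. The subcriticality hypothesis is exactly what makes this exponent negative: $\alpha(N-\gamma')-\gamma' < 0$ when $\gamma' \ge N$, and when $\gamma' < N$ it is equivalent to $\alpha < \gamma'/(N-\gamma')$, i.e.\ to \eqref{critical_alpha}. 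Hence $\int_{B_R}\tilde m_k^{\alpha+1} \to 0$, so by local uniform convergence $\int_{B_R}\tilde m^{\alpha+1} = 0$ for every $R$, that is $\tilde m \equiv 0$; this contradicts $\tilde m(0)=1$. Therefore $\|m_k\|_{L^\infty}$ is bounded. The dichotomy in the statement is inherited from Lemma \ref{lemma_welldef}: the ingredients $|\lambda_k|\le C$ and $\intTn m_k^{\alpha+1}\le C$ used above hold unconditionally only for $\alpha < \gamma'/N$ and require $C_f$ small when $\alpha \ge \gamma'/N$, while the blow-up step itself needs no further smallness.

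I expect the main difficulty to be bookkeeping rather than conceptual: calibrating the exponents $a$ and $\rho_k$ so that the HJB nonlinearity, the Laplacian and the concentration of $m$ all balance while the Kolmogorov equation stays scale invariant, and checking that Propositions \ref{hjb_regularity} and \ref{ell_regularity} apply with constants independent of $k$ (which forces careful absorption of every $\rho_k$-power and the normalization $\|\tilde m_k\|_{L^\infty}=1$). It is precisely here that the threshold $\gamma'/(N-\gamma')$ of Proposition \ref{prop_energy} resurfaces, as the largest $\alpha$ for which the rescaled energy $\int\tilde m_k^{\alpha+1}$ is forced to vanish along the blow-up.
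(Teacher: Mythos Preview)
Your proof is correct and follows essentially the same blow-up strategy as the paper: the rescaling $\rho_k=\mu_k^{-\alpha/\gamma'}$, $\tilde u_k=\rho_k^{\,\gamma'-2}u_k(x_k+\rho_k\cdot)$, $\tilde m_k=\mu_k^{-1}m_k(x_k+\rho_k\cdot)$ matches the paper's exactly (your exponent $a=(2-\gamma)/(\gamma-1)$ is $\gamma'-2$), and the contradiction comes from the same energy-scaling identity $\int \tilde m_k^{\alpha+1}\lesssim \mu_k^{\alpha N/\gamma'-(\alpha+1)}\to 0$. The only cosmetic differences are that you normalize $\tilde u_k(0)=0$, bootstrap to $W^{2,q}_{\loc}$ for $\tilde u_k$ before handling $\tilde m_k$, and pass to a limit $\tilde m$ with $\tilde m(0)=1$ versus $\tilde m\equiv 0$, whereas the paper stays at level $k$ and uses the uniform H\"older bound directly to get $\mu_k\ge c>0$ on a small ball; both routes are equivalent.
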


\begin{proof} By contradiction, let $M_k > 0, x_k \in \Tn$ be such that
\[
0 < M_k := m_k(x_k) = \max_{\Tn} m_k \to \infty, \quad \text{as $k \to \infty$}.
\]
Let us define the following blow-up sequences 
\begin{equation}\label{blowupdef}
v_k(x) := a_k^{\gamma'-2} u(x_k + a_k x), \quad \mu_k(x) := \frac{1}{M_k} m_k(x_k + a_k x), \quad a_k = M_k^{-\alpha/\gamma'},
\end{equation}
for all $x \in \RsetN$. Then, $v_k, \Lambda_k, \mu_k$ solve
\begin{equation}\label{MFGn}
\begin{cases}
- \Delta v_k(x) + H_k(\nabla v_k(x)) +\Lambda_k = W_k(x) - F_k(\mu_{k} \star \hat{\psi}_k(x)))  \\
- \Delta \mu_k(x) -{\rm div}(\nabla H_k(\nabla v_k(x)) \, \mu_k(x)) = 0  & \text{in $\mathbb{T}_k^N$},
\end{cases}
\end{equation}
where $\mathbb{T}_k^N = \{x \in \RsetN : x_k + a_k x \in \Tn \}$ and
\begin{equation}\label{blowup2}
\begin{split}
\bullet \quad & H_k(p) = a_k^{\gamma'}H(a_k^{1-\gamma'}p), \\
\bullet \quad & \Lambda_k = a_k^{\gamma'} \lambda_k, \\
\bullet \quad & W_k(x) = a_k^{\gamma'}V(x_k + a_k x), \\
\bullet \quad & F_k(\mu) = a_k^{\gamma'}f(M_k \mu),
\end{split}
\end{equation}
and $\hat{\psi}_k(x) = a^N_k \psi_k(a_k x)$, and hence it is a radial mollifier. Note that \[a_k \to 0,\] so $H_k$ satisfies \eqref{Hass}, where $C_H$, $\gamma$ are the same as for $H$, and $\Lambda_k \to 0$ by \eqref{energybound}. As for $W_k$, one has that $W_k \to 0$ uniformly on $\mathbb{T}_k^N$, in view of \eqref{Vass1}. Moreover $\mu_{k} \le 1$ on $\mathbb{T}_k^N$, so $\mu_{k} \star \hat{\psi}_k(x) \le 1$, and 
\[
0 \le F_k(\mu_{k} \star \hat{\psi}_k(x))) \le C_f a_k^{\gamma'} (M_k^\alpha + 1) \le 2 C_f
\]
for all $k$, by \eqref{fass}. The couple $(v_k, \Lambda_k)$ solves the HJB equation in \eqref{MFGn}, hence the a-priori estimates of Proposition \ref{hjb_regularity} apply, namely $\|\nabla u_k\|_{L^q(B_2)} \le C_q$, for all $q > 1$. On the other hand, $\mu_k$ is a solution of the Kolmogorov equation in \eqref{MFGn}, and it is uniformly bounded with respect to $k$, so Proposition \ref{ell_regularity} guarantees that $\|\mu_k\|_{W^{1,p}(B_1)} \le C_1$, for some $p > N$ (by choosing $q$ large enough). By standard Sobolev embeddings one may conclude that $\|\mu_k\|_{C^{0,\theta}(B_1)} \le C_2$ for some $\theta > 0$, and finally that
\[
\mu_k(x) \ge C_3 > 0 \quad \text{on $B_r$}
\]
for some positive $r, C_3$ not depending on $k$, as $\mu_k(0) = 1$. Therefore,
\begin{equation}\label{belowLalpha}
\int_{B_r} \mu^{\alpha + 1}_k \, dx \ge C_4 > 0.
\end{equation}

On the other hand, recalling \eqref{energybound} (the assumptions of Lemma \ref{lemma_welldef} are satisfied),
\begin{multline*}
\int_{\mathbb{T}_k^N} \mu_k^{\alpha +1}(x) \, dx = \frac{1}{M_k^{\alpha+1}} \int_{\mathbb{T}_k^N} m_k^{\alpha +1}(x_k + a_k x) \, dx = \frac{1}{a_k^N M_k^{\alpha+1}} \intTn m_k^{\alpha +1}(x) \, dx\\ \le \frac{C_5}{M_k^{\alpha+1 - \alpha N/\gamma'}} \rightarrow 0 \quad \text{as $k \rightarrow \infty$},
\end{multline*}
because $M_k \rightarrow \infty$, and $\alpha+1 - \alpha N/\gamma' > 0$ if and only if \eqref{critical_alpha} holds, but this contradicts \eqref{belowLalpha}.

\end{proof}

{\it End of the proof of Theorem \ref{thm_existence}.} Once the crucial estimates \eqref{energybound} and \eqref{inftybound} are established, it is sufficient to pass to the limit as $k \to \infty$. Note that uniform $L^\infty$ bounds on $m_k$ imply uniform $W^{1, p}$ bounds on $u_k$ (by Proposition \ref{hjb_regularity}), that in turn provide $W^{1, p}$ and $C^{0, \alpha}$ bounds for $m_k$ (apply Proposition \ref{ell_regularity} and Sobolev embeddings). By Schauder regularity, $u_k$ is bounded in $C^{2,\alpha}(\Tn)$, so, up to subsequences,
\[
u_k \to u \quad \text{in $C^2(\Tn)$}, \quad m_k \rightharpoonup m \quad \text{in $W^{1,p}(\Tn)$}, \quad \lambda_k \to \lambda,
\]
and by passing to the limit in \eqref{mkdef}, one has that $(u, \lambda, m)$ solves \eqref{MFG}.

\end{proof}

\begin{rem}\label{rem:var} In order to better understand the features of system \eqref{MFG}, it can be useful to consider its \emph{variational} formulation (see \cite{LasryLions}), in the particular case of $H,f$ satisfying \eqref{Hf_partic} (we also choose $V \equiv 0$ for simplicity): solutions of \eqref{MFG} are associated, at least formally, to critical points of the functional
\[
\J(A, m) = \frac{1}{\gamma'} \intTn |A|^{\gamma'} m - \frac{C_f}{\beta}\intTn m^{\beta},
\]
where $\beta = \alpha + 1$, subject to the constraint that the vector field $A : \Tn \to \RsetN$ and the distribution $m$ are coupled via the Kolmogorov equation
\begin{equation}
- \Delta m + {\rm div}(A \, m) = 0, \quad \intTn m \, dx = 1, \, \quad m > 0.
\end{equation}
The functional $\J$ can be regarded as the \emph{energy} of the system; it is convex in the defocusing case (where $-m^\beta$ is replaced by $m^\beta$), but it is not convex when the coupling is focusing. One may ask whether or not $\J$ is at least bounded from below (or above), if $A,m$ are suitably chosen. At this level, the key insight comes from Proposition \ref{prop_energy}, which states that
\begin{equation}\label{SI_vs_KIN}
\left(\intTn m^\beta  \, dx\right)^\delta \le C\left(\intTn |A|^{\gamma'} m  \, dx  + 1\right)
\end{equation}
for some $C > 0$, $\delta > 1$, at least if $\beta < 1 + \gamma'/N$, that is $\alpha < \gamma'/N$. With this inequality in mind, that deeply relies on the Gagliardo-Nirenberg inequality and $L^1$-constraint of $m$, one may argue that $\J$ is bounded from below among $A,m$ satisfying $\int |A|^{\gamma'} m < \infty$, and direct minimization of $\J$ makes sense. In other words, if the growth of $f$ at infinity is sufficiently mild, the ``kinetic'' part of $\J$, represented by $\intTn |A|^{\gamma'} m$, is stronger than the ``self-interaction'' term $\int m^\beta$. 

Proposition \ref{prop_energy} states that while \eqref{SI_vs_KIN} is not available anymore if $\beta \ge 1 + \gamma'/N$, boundedness of $\int |A|^{\gamma'} m$ still guarantees compactness of $m$ in $L^\beta(\Tn)$ if $\beta < 1 + \gamma'/(N-\gamma')$, that is when $\alpha < {\gamma'}/({N-\gamma'})$ (see Remark \ref{rem_energy}). Hence, even if direct minimization of $\J$ might not be possible, the problem should posses enough compactness, and a critical point of $\J$ might exist.

All these considerations are inspired by some classical results within the theory of \emph{focusing} nonlinear Schr\"odinger equations (NLS). The link between the MFG system \eqref{MFG} and NLS equations can be made precise in the quadratic case $\gamma = \gamma' = 2$, by the observation that if $H$ satisfies \eqref{Hf_partic}, then $\varphi^2 := m = e^{-u} / \int e^{-u}$ trivializes the Kolmogorov equation in \eqref{MFG}, and \eqref{MFG} reduces to the following Hartree equation
\[
-2 \Delta \varphi = \lambda \varphi + C_f \varphi^{2\alpha + 1}, \quad \intTn \varphi^2\, dx = 1, \, \quad \varphi > 0,
\]
which is associated to the functional
\[
\widetilde{\mathcal{E}}(\varphi) = \intTn |\nabla \varphi|^2 \, dx - \frac{C_f}{2\beta} \intTn \varphi^{2 \beta} \, dx, \quad \text{subject to $\intTn \varphi^2 = 1$}.
\]
This minimization problem is well-understood in the so-called $L^2$-subcritical regime, namely when $2 \beta < 2 + 4/N$: in this case, $\widetilde{\mathcal{E}}(\varphi)$ is bounded from below in view of the Gagliardo-Nirenberg inequality and the $L^2$ constraint, and a minimizer can be found via the direct method of calculus of variations (see, for example, \cite{CazLions, Caz} for a description in $\RsetN$). Note that $2 \beta < 2 + 4/N$ is equivalent to $\alpha < 2/N$, and $2/N$ is precisely the first exponent $\alpha_1 = \gamma'/N$ that appears in our analysis: we may regard $\alpha_1$ as the \emph{mass-critical} exponent.

In the $L^2$-supercritical regime, $\widetilde{\mathcal{E}}(\varphi)$ is not bounded from below; if $2 \beta < 2N/(N-2)$, one expects the existence of a critical point under the additional condition that the $L^2$ constraint is small, or equivalently that $C_f$ is small (see \cite{NTV}, where the setting of a bounded domain $\Omega$ and homogeneous Dirichlet boundary data is considered). The condition $2 \beta < 2N/(N-2)$ can be read as $\alpha < 2/(N-2)$, and $2/(N-2)$ is $\alpha_2 = \gamma'/(N-\gamma')$ in the quadratic case $\gamma' = 2$; in the NLS jargon, $\alpha_2$ is the \emph{energy-critical} exponent.

If $2\beta \ge 2N/(N-2)$, the lack of Sobolev embeddings and compactness is a serious issue in variational methods and the classical Pohozaev identity implies non-existence of solutions in many situations.

Back to the general MFG system \eqref{MFG}, we prove that it exhibits the same behaviour as NLS: the two critical values $\alpha_1$ and $\alpha_2$ are such that if $\alpha \in (0, \alpha_1)$, then \eqref{MFG} has a solution, if $\alpha \in [\alpha_1, \alpha_2)$ a solution exists if the additional condition that $C_f$ be small is imposed, and if $\alpha \in [\alpha_2, \infty)$ solutions may not exist. In view of this analogy with focusing NLS, we have decided to call focusing the MFG systems with decreasing (and unbounded) coupling.
\end{rem}

\section{A Pohozaev identity and non-existence of solutions}\label{s:nonex}


This section in devoted to the proof of non-existence of solutions in the ``supercritical'' regime $\alpha \ge \gamma'/(N-\gamma')$. We start with an integral identity, which is produced by implementing in the MFG setting a celebrated idea of Pohozaev (see \cite{Pohoz}).

\begin{prop}\label{pohozaev} Let $\Omega \subset \Rset^N$ be a bounded and smooth domain, and $g, G \in C^1(\overline{\Omega} \times \Rset)$ be such that
\[
\partial_m G(x,m) = g(x,m) \quad \forall x,m.
\]
Suppose that $(u, \lambda, m) \in C^2(\overline{\Omega})\times \Rset \times W^{1,1}(\Omega)$ is a solution of
\begin{equation}\label{MFGg}
\begin{cases}
- \Delta u(x) + \frac{1}{\gamma} {|\nabla u(x)|^\gamma} = g(x,m(x))  \\
- \Delta m(x) -{\rm div}(|\nabla u(x)|^{\gamma-2}\nabla u(x) \, m(x)) = 0  & \text{in $\Omega$}.  \\
\end{cases}
\end{equation}
Then, the following equality holds
\begin{equation}
N \int_\Omega G(x,m) \, dx+ \int_\Omega \nabla_x G \cdot x \, dx + \left(1- \frac{N}{\gamma}\right) \int_\Omega |\nabla u|^\gamma m + (2-N) \int_\Omega \nabla u \cdot \nabla m \, dx = I_{\partial \Omega},
\end{equation}
where
\begin{multline*}
I_{\partial \Omega} = \int_{\partial \Omega} \left( G - \nabla u \cdot \nabla m - \frac{1}{\gamma}|\nabla u|^\gamma m \right) \,  x \cdot \nu \, d \sigma \\
+ \int_{\partial \Omega}  (\nabla u \cdot \nu)(\nabla m \cdot x) + (\nabla m \cdot \nu)(\nabla u \cdot x) + |\nabla u|^{\gamma-2}(\nabla u \cdot \nu)( \nabla u \cdot x) m \, d \sigma
\end{multline*}
\end{prop}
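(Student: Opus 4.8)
The plan is to implement Pohozaev's device directly on the system: multiply the HJB equation in \eqref{MFGg} by $x\cdot\nabla m$ and the Kolmogorov equation by $x\cdot\nabla u$, integrate both over $\Omega$, and add the two identities. This reduces the statement to evaluating, by repeated integration by parts,
\[
\int_\Omega(-\Delta u)(x\cdot\nabla m) + \frac1\gamma\int_\Omega|\nabla u|^\gamma(x\cdot\nabla m) - \int_\Omega g(x,m)(x\cdot\nabla m) + \int_\Omega(-\Delta m)(x\cdot\nabla u) - \int_\Omega{\rm div}(|\nabla u|^{\gamma-2}\nabla u\,m)(x\cdot\nabla u) = 0,
\]
and then rearranging the boundary contributions into $I_{\partial\Omega}$.

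Three of these pieces are handled by elementary manipulations. For the coupling term, since $\partial_m G = g$ the chain rule gives $g(x,m)\nabla m = \nabla_x\big(G(x,m(x))\big) - (\nabla_x G)(x,m(x))$, so that, using $\int_\Omega x\cdot\nabla\phi = \int_{\partial\Omega}\phi\,(x\cdot\nu) - N\int_\Omega\phi$,
\[
\int_\Omega g(x,m)(x\cdot\nabla m) = \int_{\partial\Omega}G(x,m)(x\cdot\nu)\,d\sigma - N\int_\Omega G(x,m) - \int_\Omega(\nabla_x G)\cdot x.
\]
For $\frac1\gamma\int|\nabla u|^\gamma(x\cdot\nabla m)$ I would integrate by parts moving the derivative off $m$ (legitimate because $u\in C^2$ and $\gamma>1$ make $|\nabla u|^\gamma\in C^1(\overline\Omega)$), producing a boundary term, the contribution $-\frac N\gamma\int|\nabla u|^\gamma m$, and the term $-\frac1\gamma\int_\Omega m\,(x\cdot\nabla|\nabla u|^\gamma)$. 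For the divergence term, an integration by parts together with the pointwise identity $|\nabla u|^{\gamma-2}\nabla u\cdot(D^2u\,x)=\frac1\gamma\,x\cdot\nabla|\nabla u|^\gamma$ gives a boundary term, the contribution $+\int|\nabla u|^\gamma m$, and the term $+\frac1\gamma\int_\Omega m\,(x\cdot\nabla|\nabla u|^\gamma)$. The key point is that these last two terms cancel exactly, so no second derivatives of $u$ survive and only the multiple $(1-N/\gamma)\int|\nabla u|^\gamma m$ remains.

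The remaining and most delicate piece is the mixed second-order term $\int_\Omega(-\Delta u)(x\cdot\nabla m) + (-\Delta m)(x\cdot\nabla u)$, for which I would use the divergence identity
\[
\Delta u\,(x\cdot\nabla m)+\Delta m\,(x\cdot\nabla u)={\rm div}\big[\nabla u\,(x\cdot\nabla m)+\nabla m\,(x\cdot\nabla u)-x\,(\nabla u\cdot\nabla m)\big]+(N-2)\,\nabla u\cdot\nabla m,
\]
valid pointwise for smooth functions; integrating it over $\Omega$ contributes the volume term $(2-N)\int_\Omega\nabla u\cdot\nabla m$ together with boundary integrals involving $(\partial_\nu u)(x\cdot\nabla m)$, $(\partial_\nu m)(x\cdot\nabla u)$ and $(\nabla u\cdot\nabla m)(x\cdot\nu)$. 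Gathering all volume contributions on one side and all surface contributions on the other, the boundary terms reassemble into precisely the two surface integrals defining $I_{\partial\Omega}$, and the claimed identity follows.

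The main obstacle is regularity, since the proposition only assumes $u\in C^2(\overline\Omega)$ and $m\in W^{1,1}(\Omega)$, whereas the computation above (notably the mixed term) formally involves $D^2m$. I would first bootstrap: writing the Kolmogorov equation as $-\Delta m = {\rm div}(|\nabla u|^{\gamma-2}\nabla u\,m)$ with locally H\"older drift (as $u\in C^2$, $\gamma>1$) and iterating Sobolev embeddings with $L^p$ and Schauder estimates, starting from $m\in W^{1,1}\subset L^{N/(N-1)}_{\mathrm{loc}}$, one obtains $m\in C^{1,\theta}_{\mathrm{loc}}(\Omega)$ (indeed $m\in W^{2,p}_{\mathrm{loc}}$ when $\gamma\ge2$). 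Since every term of the final identity contains at most first derivatives of $m$, the whole argument is then made rigorous by a standard approximation — mollifying $m$ (or the drift) by smooth functions, performing the integrations by parts, and passing to the limit; the only additional care needed is a careful bookkeeping of the boundary integrals generated at each step.
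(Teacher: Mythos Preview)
Your argument is essentially the paper's own proof: both multiply the HJB equation by $x\cdot\nabla m$ and the Kolmogorov equation by $x\cdot\nabla u$, use the same pointwise identity $|\nabla u|^{\gamma-2}\nabla u\cdot(D^2u\,x)=\tfrac1\gamma\,x\cdot\nabla|\nabla u|^\gamma$ to produce the cancellation of the second--order Hamiltonian terms, and handle the mixed Laplacian piece by the same integration--by--parts computation (your divergence identity is just a compact repackaging of the paper's equation \eqref{eq1}). The only noteworthy difference is the regularization: you bootstrap $m$ to $C^{1,\theta}_{\mathrm{loc}}$ and mollify, while the paper instead smooths the Hamiltonian via $H_\epsilon(p)=(\epsilon+|p|^2)^{\gamma/2}/\gamma$ so that $m_\epsilon$ is classical and then lets $\epsilon\to 0$; both are legitimate ways to justify the formal computation when $1<\gamma<2$.
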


\begin{proof} As we already mentioned in the introduction, if $1 < \gamma < 2$ the Kolmogorov equation is solved in the weak sense (while $m$ is twice differentiable everywhere if $\gamma \ge 2$, and all the following computations are justified). In this case, it is sufficient to approximate the problem by replacing $H(p) = |p|^\gamma/\gamma$ by $H_\epsilon(p) = (\epsilon + |p|^2)^{\gamma/2}/\gamma$: in this way, $m = m_\epsilon$ is a-posteriori a classical solution of the Kolmogorov equation, and the statement of the theorem follows by letting $\epsilon \to 0$.

The following equality is obtained by integrating by parts and exchanging the order of derivation\footnote{For the sake of brevity, the Einstein summation convention on repeated indices is used here.}:
\begin{multline}\label{eq1}
\int_\Omega \nabla u \cdot \nabla (\nabla m \cdot x) \, dx = \int_\Omega u_{x_i} m_{x_i x_j} x_j \, dx +  \int_\Omega \nabla u \cdot \nabla m \, dx = \\
- \int_\Omega m_{x_i} u_{x_i x_j} x_j \, dx +  (1-N) \int_\Omega \nabla u \cdot \nabla m \, dx + \int_{\partial \Omega} \nabla u \cdot \nabla m \,  x \cdot \nu d \sigma = \\
- \int_\Omega \nabla m \cdot \nabla (\nabla u \cdot x) \, dx +  (2-N) \int_\Omega \nabla u \cdot \nabla m \, dx + \int_{\partial \Omega} \nabla u \cdot \nabla m \,  x \cdot \nu d \sigma.
\end{multline}
One sees that
\begin{equation}\label{eq2}
\frac{1}{\gamma}\nabla(|\nabla u|^\gamma)\cdot x = |\nabla u|^{\gamma-2} u_{x_i} u_{x_i x_j} x_j = |\nabla u|^{\gamma-2} \nabla u \cdot \nabla(\nabla u \cdot x) - |\nabla u|^\gamma,
\end{equation}
hence
\begin{multline}\label{eq3}
- \frac{1}{\gamma} \int_\Omega \nabla(|\nabla u|^\gamma) \cdot x \, m \, dx = \int_\Omega \nabla m \cdot \nabla (\nabla u \cdot x) \, dx + \int_\Omega |\nabla u|^\gamma m \, dx \\
-\int_{\partial \Omega} (\nabla m \cdot \nu)(\nabla u \cdot x) \, d \sigma - \int_{\partial \Omega} |\nabla u|^{\gamma-2}(\nabla u \cdot \nu)( \nabla u \cdot x) m \, d \sigma,
\end{multline}
by exploiting  the second equation of \eqref{MFGg} multiplied by $\nabla u \cdot x$. Therefore, by multiplying \eqref{MFGg} by $\nabla m \cdot x$ and integrating by parts we obtain
\begin{multline*}
\int_\Omega g \, \nabla m \cdot x \, dx = - \int_\Omega \Delta u (\nabla m \cdot x) \, dx +\frac{1}{\gamma} \int_\Omega  |\nabla u|^\gamma (\nabla m \cdot x) \, dx =  -\int_{\partial \Omega} (\nabla u \cdot \nu)(\nabla m \cdot x) \, d \sigma + \\
\int_\Omega \nabla u \cdot \nabla (\nabla m \cdot x) \, dx - \frac{1}{\gamma} \int_\Omega {\rm div}(|\nabla u|^\gamma x) m \, dx + \frac{1}{\gamma} \int_{\partial \Omega} |\nabla u|^\gamma m \,  x \cdot \nu \, d \sigma, \\
\end{multline*}
that is, plugging in \eqref{eq1} and \eqref{eq3},
\begin{multline}\label{eq4}
\int_\Omega g \, \nabla m \cdot x \, dx = 
- \int_\Omega \nabla m \cdot \nabla (\nabla u \cdot x) \, dx - \frac{N}{\gamma} \int_\Omega |\nabla u|^\gamma m \, dx  - \frac{1}{\gamma} \int_\Omega \nabla(|\nabla u|^\gamma) \cdot x \, m \, dx  \\ + (2-N) \int_\Omega \nabla u \cdot \nabla m \, dx + \int_{\partial \Omega} \left( \nabla u \cdot \nabla m + \frac{1}{\gamma}|\nabla u|^\gamma m\right) \,  x \cdot \nu  - (\nabla u \cdot \nu)(\nabla m \cdot x) \, d \sigma, = \\
\left(1 - \frac{N}{\gamma}\right) \int_\Omega |\nabla u|^\gamma m \, dx  + (2-N) \int_\Omega \nabla u \cdot \nabla m \, dx +
\int_{\partial \Omega} \left( \nabla u \cdot \nabla m + \frac{1}{\gamma}|\nabla u|^\gamma m\right) \,  x \cdot \nu\, d \sigma \\ - \int_{\partial \Omega}  (\nabla u \cdot \nu)(\nabla m \cdot x) + (\nabla m \cdot \nu)(\nabla u \cdot x) + |\nabla u|^{\gamma-2}(\nabla u \cdot \nu)( \nabla u \cdot x) m \, d \sigma.
\end{multline}
Finally, integrating again by parts,
\begin{equation}
-N \int_\Omega G \, dx = - \int_\Omega G\,  {\rm div}(x) \, dx =  \int_\Omega \nabla_x G \cdot x \, dx +  \int_\Omega g \, \nabla m \cdot x \, dx - \int_{\partial \Omega} G \,  x \cdot \nu \, d \sigma
\end{equation}
which gives the stated equality in view of \eqref{eq4}.
\end{proof}

The following basic lemma will be useful to control the behavior of $u,m,\nabla u, \nabla m$ at infinity. We provide the proof for the convenience of the reader (following the lines of \cite{BerestyckiLions}, where the Pohozaev identity is used in $\RsetN$ in the setting of NLS equations).

\begin{lem}\label{decay} Let $h \in L^1(\RsetN)$. Then, there exists a sequence $R_n \to \infty$ such that
\[
R_n \int_{\partial B_{R_n}} |h(x)| \, dx \rightarrow 0 \quad \text{as $n \to \infty$}.
\]
\end{lem}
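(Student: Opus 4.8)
The statement is a standard averaging lemma: an $L^1$ function cannot have spherical integrals weighted by the radius that stay bounded away from zero on *every* large sphere. The plan is to argue by contradiction via integration in polar coordinates. Suppose, for contradiction, that no such sequence exists; then there are $\varepsilon > 0$ and $R_0 > 0$ such that
\[
R \int_{\partial B_R} |h(x)| \, d\sigma(x) \ge \varepsilon \qquad \text{for all } R \ge R_0.
\]

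From here I would simply integrate this lower bound in $R$ over the dyadic (or any) unbounded interval. By the coarea formula (polar coordinates),
\[
\int_{B_{2^n} \setminus B_{R_0}} |h(x)| \, dx = \int_{R_0}^{2^n} \left( \int_{\partial B_R} |h(x)| \, d\sigma(x) \right) dR \ge \int_{R_0}^{2^n} \frac{\varepsilon}{R} \, dR = \varepsilon \log \frac{2^n}{R_0} \to \infty
\]
as $n \to \infty$. This contradicts $h \in L^1(\RsetN)$, since the left-hand side is bounded by $\|h\|_{L^1(\RsetN)}$ for every $n$. Hence the assumed $\varepsilon$ cannot exist, i.e. $\liminf_{R \to \infty} R \int_{\partial B_R} |h| \, d\sigma = 0$, and one extracts the desired sequence $R_n \to \infty$.

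There is essentially no obstacle here — the only point requiring a word of care is the application of the coarea formula to $|h|$, which is a nonnegative measurable function, so Tonelli's theorem legitimizes writing the volume integral as an iterated integral regardless of integrability questions; and one should note that the function $R \mapsto \int_{\partial B_R} |h|\, d\sigma$ is measurable and finite for a.e. $R$ precisely because its integral over $[R_0,\infty)$ is finite. The extraction of $R_n$ then follows by choosing, for each $n$, a radius $R_n > \max(n, R_{n-1})$ with $R_n \int_{\partial B_{R_n}} |h| \, d\sigma < 1/n$, which is possible since the $\liminf$ is zero.
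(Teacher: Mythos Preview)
Your proof is correct and follows essentially the same route as the paper: both use the coarea formula to write $\int_{\RsetN} |h| = \int_0^\infty \int_{\partial B_R} |h|\,d\sigma\,dR$ and then argue by contradiction that a positive $\liminf$ of $R\int_{\partial B_R}|h|$ would force $R \mapsto \int_{\partial B_R}|h|$ out of $L^1(0,\infty)$. Your write-up is slightly more detailed (explicit $\varepsilon/R$ lower bound and the sequence extraction), but the argument is identical in substance.
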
 

\begin{proof} By the co-area formula, it holds true that
\begin{equation}\label{eq30}
\intRn |h(x)|\,dx = \int_0^\infty \int_{\partial B_{R}} |h(x)| \, dx \, dR < \infty.
\end{equation}
If, by contradiction, 
\[
\liminf_{R \rightarrow \infty} R \int_{\partial B_{R}} |h(x)| \, dx = \alpha > 0,
\]
then
\[
R \mapsto \int_{\partial B_{R}} |h(x)| \, dx
\]
would not be in $L^1(0, +\infty)$, that is not compatible with \eqref{eq30}.
\end{proof}

While the negativity of the ergodic constant $\lambda$ is a direct consequence of the comparison principle when the state space $\Omega$ is the flat torus, if the problem is set on $\RsetN$ the following lemma is required.

\begin{lem}\label{lambdanegative}
Suppose that $(u, \lambda) \in C^2(\RsetN) \times \Rset$ is a solution of the HJB equation in \eqref{MFG}, where $H$ is of the form \eqref{Hf_partic}, $V \equiv 0$, and $u$ satisfies \eqref{uinfty}. Then,
\[
\lambda \le 0.
\]
\end{lem}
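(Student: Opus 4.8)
The plan is to integrate the HJB equation against the invariant density $m$ — but since we are on $\RsetN$ there is no compact state space, so I first need a replacement for "integrate over $\Tn$". The natural substitute is a cut-off argument: fix a cut-off $\chi_R$ supported in $B_{2R}$, equal to $1$ on $B_R$, with $|\nabla \chi_R| \le C/R$, and test the HJB equation $-\Delta u + \frac{1}{\gamma}|\nabla u|^\gamma + \lambda = -f(m)$ against $\chi_R$ (or more precisely argue directly with $m$ if one can afford it; but $m$ need not be bounded, so $\chi_R$ is safer). Since $f \ge 0$ by \eqref{fass} and $V \equiv 0$, the right-hand side is $\le 0$, hence
\[
\lambda \int_{\RsetN} \chi_R \, dx \le \int_{\RsetN} \Delta u \, \chi_R \, dx - \frac{1}{\gamma}\int_{\RsetN} |\nabla u|^\gamma \chi_R \, dx \le \int_{\RsetN} u \, \Delta \chi_R \, dx,
\]
after two integrations by parts and discarding the nonnegative gradient term. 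The growth condition \eqref{uinfty} controls $u$ polynomially, while $|\Delta \chi_R| \le C/R^2$ and the support has measure $\sim R^N$, so this bound is far too lossy as it stands.

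The fix is to test against $m \chi_R$ instead, using the Kolmogorov equation to kill the awkward terms. Multiply the HJB equation by $m\chi_R$ and integrate; multiply the Kolmogorov equation $-\Delta m - \mathrm{div}(|\nabla u|^{\gamma-2}\nabla u\, m) = 0$ by $u \chi_R$ and integrate; add. The second-order terms combine, via the standard MFG duality computation, into boundary-type integrals carrying a factor $\nabla\chi_R$, and one is left with
\[
\lambda \int \chi_R m \, dx + \frac{1}{\gamma}\int |\nabla u|^\gamma m \chi_R \, dx + \int f(m) m \chi_R \, dx = \int (\text{terms involving }\nabla\chi_R).
\]
Wait — the sign on the $|\nabla u|^\gamma$ term after duality is actually $\left(1-\tfrac1\gamma\right)\int |\nabla u|^\gamma m\chi_R \ge 0$, coming from $\nabla H(p)\cdot p - H(p) = (1-1/\gamma)|\nabla u|^\gamma \ge 0$ for $H = |p|^\gamma/\gamma$. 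So the left side is $\lambda \int \chi_R m + (\text{nonnegative}) + (\text{nonnegative})$, and the right side is a remainder term bounded by $\frac{C}{R}\big(\int_{B_{2R}\setminus B_R}|\nabla u|\,|\nabla m| + \int_{B_{2R}\setminus B_R}|\nabla u|^{\gamma-1}|u| m + \dots\big)$ plus similar pieces; note $\int m = 1 < \infty$ and $m \in W^{1,1}$, and \eqref{uinfty} gives polynomial control on $u, \nabla u$, but we would need these remainders to vanish.

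Here is where Lemma \ref{decay} enters, and it is the step I expect to be the real obstacle: the remainder terms are not obviously integrands of $L^1(\RsetN)$ functions times $1/R$, because the polynomial growth of $u$ and $\nabla u$ from \eqref{uinfty} could outpace the decay of $m$. The clean route is to first establish, from finiteness of $\int |\nabla u|^\gamma m$ (which must come as an a priori input — indeed in Theorem \ref{thm_nonex} it is part of \eqref{finiteness}, so presumably the lemma should be read with such finiteness available, or it is derivable here since $f(m)m \ge 0$ forces $\int|\nabla u|^\gamma m < \infty$ from the duality identity once the boundary terms are controlled) that the relevant remainder integrands lie in $L^1$; then pick the good radii $R_n \to \infty$ supplied by Lemma \ref{decay} so that $R_n \int_{\partial B_{R_n}}(\text{remainder density}) \to 0$. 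Along that subsequence, the remainder $\to 0$ while $\int \chi_{R_n} m \to \int m = 1 > 0$ by monotone convergence, and the two nonnegative terms on the left only help. Passing to the limit gives $\lambda \le \lambda + (1-\tfrac1\gamma)\int|\nabla u|^\gamma m + \int f(m)m = 0$, hence $\lambda \le 0$. The delicate point to get right in the full proof is justifying that the $\nabla\chi_R$-remainders are genuinely negligible along a suitable sequence of radii — i.e.\ matching the growth allowed by \eqref{uinfty} against the integrability coming from $m\in W^{1,1}$ and $\int|\nabla u|^\gamma m<\infty$ — and making sure all integrations by parts are legitimate (which is why one works with the regularized Hamiltonian $H_\epsilon$ as in Proposition \ref{pohozaev} when $\gamma < 2$, then lets $\epsilon \to 0$).
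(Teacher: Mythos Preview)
Your approach has a genuine gap: the sign in the duality identity is wrong, and this is fatal. When you multiply the HJB equation by $m\chi_R$, the Kolmogorov equation by $u\chi_R$, and combine, the identity you actually obtain (up to $\nabla\chi_R$-remainders) is
\[
\lambda \int m\chi_R \;-\; \left(1-\tfrac{1}{\gamma}\right)\int |\nabla u|^\gamma m\chi_R \;+\; \int f(m)m\chi_R \;=\; (\text{remainder}),
\]
because $\int \nabla u\cdot\nabla m\,\chi_R = -\int m\,\nabla H(\nabla u)\cdot\nabla u\,\chi_R + (\ldots)$ and then $-\nabla H(p)\cdot p + H(p) = -(1-1/\gamma)|p|^\gamma$. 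In the limit this is the standard energy identity $\lambda = (1-1/\gamma)\int |\nabla u|^\gamma m - \int f(m)m$, a \emph{difference} of two nonnegative terms (cf.\ \eqref{eq20} in the existence section). In the focusing case this carries no sign information, so your conclusion $\lambda \le 0$ does not follow. This is exactly what makes the lemma nontrivial here; in the defocusing case the same identity immediately gives $\lambda \ge 0$.

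The paper's proof takes a completely different route and does not use $m$ or the Kolmogorov equation at all (note the lemma only assumes the HJB equation). One tests against a family of Gaussian probability densities $\mu_\epsilon(x) = (\epsilon/2\pi)^{N/2} e^{-\epsilon|x|^2/2}$: by Legendre duality $H(\nabla u) \ge \nabla u\cdot(\epsilon x) - \tfrac{1}{\gamma'}|\epsilon x|^{\gamma'}$, so the HJB equation yields $-\Delta u + \nabla u\cdot(\epsilon x) + \lambda \le \tfrac{1}{\gamma'}|\epsilon x|^{\gamma'} - f(m) \le \tfrac{1}{\gamma'}|\epsilon x|^{\gamma'}$. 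Multiplying by $\mu_\epsilon$ and integrating over $B_R$, the crucial cancellation $\nabla\mu_\epsilon = -\epsilon x\,\mu_\epsilon$ kills the drift term, leaving $\lambda \int_{B_R}\mu_\epsilon \le \tfrac{1}{\gamma'}\int_{B_R}|\epsilon x|^{\gamma'}\mu_\epsilon + \int_{\partial B_R}\mu_\epsilon\,\partial_\nu u$. The Gaussian decay of $\mu_\epsilon$ dominates the polynomial growth of $\nabla u$ from \eqref{uinfty}, so the boundary term vanishes as $R\to\infty$; the first term on the right is $O(\epsilon^{\gamma'/2})$. Sending $R\to\infty$ then $\epsilon\to 0$ gives $\lambda \le 0$.
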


\begin{proof} Let $\mu_\epsilon(x) := \epsilon^{N/2} (2 \pi)^{-N/2} e^{-\epsilon|x|^2/2}$ for all $x \in \RsetN$, $\epsilon > 0$. Note that $\intRn \mu_\epsilon = 1$ for all $\epsilon$.

Since $|p|^\gamma/\gamma$ is the Legendre transform of $|p|^{\gamma'}/\gamma'$, one observes that
\[
- \Delta u(x) + \nabla u(x) \cdot (\epsilon x) - \frac{1}{\gamma'}|\epsilon x|^{\gamma'} + \lambda \le - \Delta u(x) + \sup_{\alpha \in \RsetN}\left\{\nabla u(x) \cdot \alpha - \frac{1}{\gamma'}|\alpha|^{\gamma'} \right\} + \lambda = -f(m(x)),
\]
by choosing $\alpha = \alpha(x) = \epsilon x$. Multiplying this equality by $\mu_\epsilon$ on $B_R$, $R > 0$ and integrating by parts gives
\begin{multline*}
\int_{B_R} \nabla u \cdot \nabla \mu_\epsilon \, dx - \int_{\partial B_R} \mu_\epsilon  \nabla u \cdot \nu \, dx + \int_{B_R} \nabla u \cdot (\epsilon x) \mu_\epsilon \, dx + \lambda \int_{B_R} \mu_\epsilon \, dx \le\\ \frac{1}{\gamma'} \int_{B_R} |\epsilon x|^{\gamma'} \mu_\epsilon \, dx - \int_{B_R} f(m(x)) \mu_\epsilon \, dx.
\end{multline*}
Recall that $f \ge 0$ and $\nabla \mu_\epsilon = -\epsilon x \mu_\epsilon$, so one obtains
\begin{equation}\label{eq31}
 \lambda \int_{B_R} \mu_\epsilon \, dx \le \frac{1}{\gamma'} \int_{B_R} |\epsilon x|^{\gamma'} \mu_\epsilon \, dx + \int_{\partial B_R} \mu_\epsilon  \nabla u \cdot \nu \, dx.
\end{equation}
The first integral of the right-hand side of \eqref{eq31} can be made arbitrarily small as $\epsilon \to 0$, that is $\int_{B_R} |\epsilon x|^{\gamma'} \mu_\epsilon  \le C \epsilon^{\gamma'/2}$, where $C > 0$ does not depend on $\epsilon$. Moreover, for any $\epsilon > 0$ fixed,
\[
\left| \int_{\partial B_R} \mu_\epsilon  \nabla u \cdot \nu \, dx \right| \le C \epsilon^{N/2} |\partial B_R| \| \nabla u \|_{L^\infty(\partial B_R)} e^{-\epsilon|R|^2/2} \to 0 \quad \text{as $R \to \infty$},
\]
because, $|\partial B_R|, \| \nabla u \|_{L^\infty(\partial B_R)} $ grow polynomially in $R$ as $R \to \infty$. Therefore, letting $R \to \infty$ in \eqref{eq31} provides
\[
\lambda \le C \epsilon^{\gamma'/2},
\]
and the stated assertion follows by letting $\epsilon \to 0$.
\end{proof}

We are now ready to prove the non-existence theorem.

\begin{proof}[Proof of Theorem \ref{thm_nonex}]
Suppose, by contradiction, that $(u, \lambda, m) \in C^2(\RsetN) \times \Rset \times W^{1,1}(\RsetN)$ is a solution of \eqref{MFG} satisfying
\eqref{uinfty}-\eqref{finiteness}. Let us define $g(x,m) = - \lambda- f(m), G(x,m) = - \lambda m - F(m)$; of course, $\partial_m G(x,m) = g(x,m)$ for all $x,m$. We argue that, by Proposition \ref{pohozaev}, it holds true that
\begin{equation}\label{eq43}
N \intRn G(x,m) \, dx + \left(1- \frac{N}{\gamma}\right) \intRn |\nabla u|^\gamma m + (2-N) \intRn \nabla u \cdot \nabla m \, dx = 0.
\end{equation}
In particular, the proposition applies if we choose $\Omega = B_R$, $R > 0$. In order to obtain \eqref{eq43}, it suffices to observe that
\[
|I_{\partial B_{R_n}}| \le R_n \int_{\partial {B_{R_n}}} ( |G| + 3 |\nabla u| | \nabla m| + 2|\nabla u|^\gamma m )  \, d \sigma \to 0,
\]
for some sequence $R_n \to \infty$, because $G, |\nabla u| | \nabla m|, |\nabla u|^\gamma m \in L^1(\RsetN)$ (use Lemma \ref{decay}).

It also holds true that
\begin{align}
\intRn \nabla u \cdot \nabla m \, dx & = - \intRn |\nabla u|^{\gamma} m \, dx \label{eq41} \\ 
\intRn \nabla u \cdot \nabla m \, dx & = \intRn g(x,m(x))m(x) \, dx - \frac{1}{\gamma} \intRn |\nabla u|^{\gamma} m \label{eq42} \, dx.
\end{align}
Indeed, set
\begin{equation}\label{omegas}
\text{$\Omega_s := \{x \in \RsetN : u(x) \le s\}\quad$ and $\quad v_s(x) := u(x) - s$ for all $s > 0, x \in \RsetN$.}
\end{equation}
Without loss of generality, we may suppose that $u(0) = 0$. In this situation, every $\Omega_s$ is non-empty and bounded, and $\cup_s \Omega_s = \RsetN$, by the fact that $u \to \infty$ as $|x| \to \infty$. If one multiplies the Kolmogorov equation in \eqref{MFG} by $v_s$, an integration by parts yields
\[
\int_{\Omega_s} \nabla v_s \cdot \nabla m \, dx = - \int_{\Omega_s} |\nabla u|^{\gamma-2} \nabla u \cdot \nabla v_s m \, dx.
\]
Note that $\nabla u = \nabla v_s$ for all $s$, so \eqref{eq41} follows by letting $s \to \infty$. As for \eqref{eq42}, it suffices to multiply the HJB equation in \eqref{MFG} by $m$ and integrate by parts on a generic ball $B_R$, that is
\[
\int_{B_R} \nabla u \cdot \nabla m\, dx - \int_{\partial B_R} m \nabla u \cdot \nu \, dx + \frac{1}{\gamma}\int_{B_R} |\nabla u|^\gamma \, dx  = \int_{B_R} g(m) m \, dx.
\]
A straightforward application of the Holder inequality leads to
\begin{equation}\label{eq40}
\intRn |\nabla u| m \, dx \le \left(\intRn |\nabla u|^\gamma m \, dx \right)^{1/\gamma} \left(\intRn m \, dx \right)^{1/\gamma'} < \infty,
\end{equation}
so 
\[
\int_{\partial B_{R_n}} m \nabla u \cdot \nu \, dx \to 0 \quad \text{as $R_n \to \infty$},
\]
along an appropriate sequence $R_n \to \infty$, in view of \eqref{eq40} and Lemma \ref{decay}, and \eqref{eq42} follows.

Equations \eqref{eq41} and \eqref{eq42} are now plugged into \eqref{eq43} to get
\[
N \intRn G(m) \, dx - (N- \gamma') \intRn g(m)m \, dx = 0,
\]
that is
\[
\intRn [ (N-\gamma') f(m) m - N F(m) ] \, dx = \lambda \gamma' \le 0
\]
by $\intRn = 1$ and Lemma \ref{lambdanegative}. Therefore, recalling \eqref{fass2}, a contradiction is reached.
\end{proof}

\begin{rem}\label{rem:infinity} We end this section with some remarks about conditions \eqref{uinfty} and \eqref{finiteness}. As for $u \to \infty$ as $x \to \infty$, it is a quite natural ``boundary'' condition for ergodic HJB equations on the whole space. Observe that the optimal control $-\nabla H(\nabla u)$ should give rise to an ergodic process, and this happens heuristically if $-\nabla H(\nabla u) \cdot x < 0$ for $x$ large, that is $\nabla u \cdot x > 0$ when $H$ is the model Hamiltonian \eqref{Hf_partic}. We refer to \cite{CirantErgodic} (and references therein) for more information about ergodic problems on the whole space. The polynomial control at infinity of $\nabla u$ is more technical than substantial, and it is usually true under mild assumptions: for example, if $\|m\|_{W^{1,\infty}(\RsetN)} < \infty$, one may invoke standard Bernstein estimates to obtain a uniform control of $\nabla u$ on the whole space (see, for example, \cite{lasry1989nonlinear}).

As we have seen in Remark \ref{rem:var}, boundedness of $|\nabla u|^\gamma m$, $m^{\alpha+1}$ in $L^1(\RsetN)$ is a natural requirement for the well-posedness of \eqref{MFG}. If one just looks at the Kolmogorov equation, integrability of the vector field $-\nabla H(\nabla u)$ with respect to $m$ is recommended to ensure some minimal regularity of $m$ and  uniqueness of the invariant distribution itself (see \cite{Metafune}).

Integrability of $|\nabla u| |\nabla m|$ is another technical requirement; it is a consequence of the other conditions in the quadratic case, and we expect the same also in more general situations: the decay at infinity of $m$ is usually exponential, while the growth of $|\nabla u|$ is polynomial (see \cite{CirantErgodic}).

To conclude, it is not the aim of this work to pursue non-existence results under minimal conditions on $u,m$ at infinity; our focus here is concentration of $m$ caused by criticality phenomena, rather than a careful analysis of the behavior of the problem at infinity.
\end{rem}

\section{Uniqueness in very special cases}\label{s:unique}

The aim of this section is to obtain uniqueness results for the system \eqref{MFG}, in the case
\begin{equation}\label{polyHf}
\Omega = \RsetN, \qquad V \equiv 0, \qquad H(p) = \frac{1}{2} |p|^2, \qquad f(m) = m^\alpha, \quad \alpha > 0.
\end{equation}
Even though in what follows we might multiply $H, f$ by some positive constants and obtain analogous results, we stress that the assumption \eqref{polyHf} is still quite rigid and selects a very special class of focusing MFG systems. The quadratic form of $H$ enables us to exploit the standard Hopf-Cole transformation (see \cite{LasryLions}), in the following way.

\begin{lem}\label{hopfcole} Let $(u, \lambda, m)$ be a classical solution of \eqref{MFG}-\eqref{uinfty}, such that
\begin{equation}\label{finite_energy}
\intRn |\nabla u|^2 m < \infty.
\end{equation}
Then, $\varphi := e^{-u/2}$ satisfies
\begin{equation}\label{semilin}
\begin{cases}
2 \Delta \varphi + \lambda \varphi + \varphi^{2\alpha +1} = 0 & \text{in $\RsetN$} \\
\varphi(x) \rightarrow 0 \quad\text{as $x \rightarrow \infty$} \\
\int_\RsetN \varphi^2 \, dx = 1, \, \varphi > 0,
\end{cases}
\end{equation}
and $m = \varphi^2$.
\end{lem}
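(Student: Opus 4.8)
The plan is to apply the Hopf-Cole substitution $\varphi := e^{-u/2}$ directly and verify each of the three conditions in \eqref{semilin}, together with the identity $m = \varphi^2$. First I would compute the derivatives: $\nabla \varphi = -\tfrac12 \varphi \nabla u$ and $\Delta \varphi = -\tfrac12 \varphi \Delta u + \tfrac14 \varphi |\nabla u|^2$. Substituting the HJB equation (with $H(p) = \tfrac12|p|^2$, $V \equiv 0$, $f(m) = m^\alpha$), namely $-\Delta u + \tfrac12 |\nabla u|^2 + \lambda = -m^\alpha$, gives $2\Delta \varphi = -\varphi \Delta u + \tfrac12 \varphi |\nabla u|^2 = -\varphi(\Delta u - \tfrac12|\nabla u|^2) = -\varphi(\lambda + m^\alpha)$, so $2\Delta \varphi + \lambda \varphi + \varphi m^\alpha = 0$. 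The remaining task is then to identify $m = \varphi^2$, which upgrades $\varphi m^\alpha$ to $\varphi^{2\alpha+1}$ and also gives the $L^2$ normalization $\int \varphi^2 = \int m = 1$.

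The key step, therefore, is showing $m = e^{-u}$ (up to the normalization already forced by $\int m = 1$, so in fact $m = e^{-u}$ exactly once we know $\int e^{-u} = 1$; more precisely I will show $m = c\, e^{-u}$ for a positive constant and then $c = 1$). To see this I would plug $\bar m := e^{-u}$ into the Kolmogorov operator: since $\nabla \bar m = -\bar m \nabla u$ and $\nabla H(\nabla u) = \nabla u$, one has $-\Delta \bar m - \mathrm{div}(\nabla u\, \bar m) = -\Delta \bar m - \mathrm{div}(-\nabla \bar m) = 0$, so $\bar m$ solves the same equation as $m$. Thus $w := m/\bar m = m\, e^{u}$ should be shown to be constant. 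Formally $w$ satisfies $-\Delta w + b \cdot \nabla w = 0$ for a suitable drift (the standard computation: $m$ and $\bar m$ both solve the adjoint equation, and their ratio satisfies the homogeneous equation in divergence form weighted by $\bar m$), and a Liouville-type argument using \eqref{uinfty} and the finite-energy condition \eqref{finite_energy} forces $w \equiv$ const. This is where I expect the main obstacle to lie: justifying the Liouville/uniqueness statement for the invariant measure on the whole space $\RsetN$ rigorously, since the standard uniqueness results for invariant distributions (cf.\ the reference to \cite{Metafune}) typically require integrability of the drift $\nabla u$ against $m$, which is exactly what \eqref{finite_energy} provides via H\"older: $\int |\nabla u|\, m \le (\int |\nabla u|^2 m)^{1/2} (\int m)^{1/2} < \infty$. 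One must check that the twice-differentiability of $u$ (hence of $\bar m = e^{-u}$) and the growth control on $\nabla u$ from \eqref{uinfty} put us in the regime where such a uniqueness theorem applies, and that $\varphi = e^{-u/2} \to 0$ follows from $u \to +\infty$ in \eqref{uinfty}.

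Once $m = \varphi^2$ is established, the rest is bookkeeping: the decay $\varphi(x) \to 0$ as $|x| \to \infty$ is immediate from $u(x) \to +\infty$; positivity of $\varphi$ is automatic; the normalization $\int \varphi^2 = \int m = 1$ is given; and substituting $m^\alpha = \varphi^{2\alpha}$ into the equation already derived yields $2\Delta \varphi + \lambda \varphi + \varphi^{2\alpha+1} = 0$. I would also note in passing that the regularity is sufficient for all these manipulations since $u \in C^2$ implies $\varphi \in C^2$ and the Kolmogorov computation for $e^{-u}$ is classical. The only genuinely delicate point, to be handled carefully, is the uniqueness of the invariant density on $\RsetN$ — everything else is a direct verification.
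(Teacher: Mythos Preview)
Your proposal is correct and follows essentially the same route as the paper: derive $2\Delta\varphi + \lambda\varphi + m^\alpha\varphi = 0$ from the HJB equation, observe that both $m$ and $e^{-u}$ solve the Kolmogorov equation, and invoke uniqueness of the invariant probability density on $\RsetN$ (enabled by $\int |\nabla u|^2 m < \infty$) to identify $m = \varphi^2$. The paper simply cites a black-box uniqueness result (\cite{BRS11}) for this last step rather than sketching the Liouville argument for the ratio $w = m e^{u}$, but the strategy and the identification of the delicate point are the same.
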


\begin{proof} By exploiting the HJB equation in \eqref{MFG}, an easy computation shows that $\varphi = e^{-u/2}$ solves
\begin{equation}\label{eq51}
2 \Delta \varphi + \lambda \varphi + m^{\alpha}\varphi = 0 \quad \text{in $\RsetN$}.
\end{equation}
Since $u \to +\infty$ as $x \to \infty$, $\varphi$ vanishes at infinity. Note that $m$ and $\mu := e^{-u}$ are both solution of the Kolmogorov equation
\begin{equation}\label{kolmoB}
-\Delta m +{\rm div}(A \, m) = 0 \quad \text{on $\RsetN$,}
\end{equation} where $A = - \nabla u$. Applying uniqueness results of probability solutions for \eqref{kolmoB} (see for example \cite{BRS11} and references therein), since $\intRn |A|^2 m < \infty$, one has $m \equiv \mu = \varphi^2$. Substituting the last equality into \eqref{eq51} concludes the proof.
\end{proof}

\begin{prop} Suppose that $0 < \alpha< 2/(N-2)$ and $\alpha \neq 2/N$. Then, there exists a unique classical solution $(u, \lambda, m)$ of \eqref{MFG} such that
\[
\intRn |\nabla u|^2 m < \infty, \qquad \text{and} \quad \intRn x \, m \, dx = 0.
\]
Moreover, $m$ and $u$ are radially symmetric around the origin.
\end{prop}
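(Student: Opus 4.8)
The plan is to transfer the problem, via Lemma \ref{hopfcole}, to the scalar semilinear equation \eqref{semilin}, and then to read off uniqueness from the classical theory of ground states. First I would note that the correspondence in Lemma \ref{hopfcole} is in fact a bijection onto the relevant class: if $\varphi$ solves \eqref{semilin}, then $u := -2\log\varphi$, $m := \varphi^2$ and the same $\lambda$ give a classical solution of \eqref{MFG} with $\intRn |\nabla u|^2 m = 4\intRn|\nabla\varphi|^2 < \infty$, and \eqref{uinfty} holds because a positive decaying solution of a subcritical semilinear equation decays exponentially together with $|\nabla\varphi|/\varphi$ bounded at infinity (so $u\to\infty$ and $|\nabla u|$ is bounded). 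Hence it suffices to prove that \eqref{semilin} has a unique solution with $\intRn x\,\varphi^2\,dx = 0$, and that it is radial about the origin.

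The second, decisive, step is to pin down $\lambda$, which is a priori free. From Lemma \ref{lambdanegative} we already know $\lambda\le 0$; the case $\lambda = 0$ is excluded by the Gidas--Spruck nonexistence theorem for positive solutions of $-2\Delta\varphi = \varphi^{2\alpha+1}$ on $\RsetN$ in the subcritical range $2\alpha+1 < (N+2)/(N-2)$, which is exactly $\alpha < 2/(N-2)$ when $N\ge 3$ (and automatic for $N\le 2$). So $\lambda < 0$. Setting $B := \sqrt{-\lambda/2}$ and $A := (-\lambda)^{1/(2\alpha)}$, the rescaled function $w(y) := A^{-1}\varphi(B^{-1}y)$ solves the normalized equation
\[
\Delta w - w + w^{2\alpha+1} = 0 \quad\text{in } \RsetN,\qquad w>0,\qquad w(y)\to 0\ \text{as } |y|\to\infty.
\]
By the Gidas--Ni--Nirenberg moving-plane theorem $w$ is, up to a translation, radially symmetric and strictly decreasing, and by Kwong's uniqueness theorem for ground states (valid precisely because $1 < 2\alpha+1 < (N+2)/(N-2)$) such a $w$ is unique up to translation; in particular $W := \intRn w^2$ is a universal constant. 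The mass constraint then reads $1 = \intRn\varphi^2 = A^2 B^{-N} W = 2^{N/2}(-\lambda)^{1/\alpha - N/2}\,W$. Since $\alpha\neq 2/N$ the exponent $1/\alpha - N/2$ is nonzero, so $-\lambda\in(0,\infty)$ is uniquely determined; running the same computation in reverse, the value of $\lambda$ so obtained does produce a solution, which gives existence.

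Finally, with $\lambda$ (hence $A,B$) fixed, every solution is $\varphi(x) = A\,w(B(x-x_0))$ with $w$ the radial ground state centered at the origin, so $m = \varphi^2$ is radial about $x_0$, and the condition $\intRn x\,m\,dx = 0$ forces $x_0 = 0$; undoing the Hopf--Cole transformation yields a unique $(u,\lambda,m)$, radial about the origin. The main obstacle, and the real content of the argument, is exactly that $\lambda$ is an unknown and \eqref{semilin} carries a one-parameter scaling that moves $\lambda$: uniqueness would fail without a further normalization, and the constraint $\intRn m = 1$ singles out $\lambda$ only because $\alpha\neq 2/N$ — at the mass-critical exponent the scaling preserves the $L^2$ norm, and a whole curve of solutions with distinct $\lambda$ appears. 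Everything else is an application of standard, imported facts about the scalar equation \eqref{semilin}.
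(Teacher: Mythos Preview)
Your argument is correct and follows essentially the same route as the paper: apply the Hopf--Cole reduction of Lemma~\ref{hopfcole}, rule out $\lambda\ge 0$, rescale to the normalized ground-state equation, invoke Gidas--Ni--Nirenberg (radial symmetry) and Kwong (uniqueness), and then use the $L^2$ constraint together with $\alpha\neq 2/N$ to pin down $\lambda$, with the centering condition fixing $x_0=0$. The only cosmetic differences are your choice of normalization constants and your explicit discussion of the inverse Hopf--Cole map for existence; the paper simply appeals to ``standard existence arguments'' for the normalized equation.
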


\begin{proof} Suppose that $(u_i, \lambda_i, m_i)$, $i = 1,2$ are classical solutions of \eqref{MFG}. By Lemma \ref{hopfcole}, $\lambda_i$ and $\varphi_i = e^{-u_i/2} = \sqrt{m_i}$ are classical solutions of the semilinear elliptic equation \eqref{semilin}. Firstly, $\int |\nabla u_i|^2 m_i < \infty$ reads
\[
\intRn |\nabla \varphi_i|^2 < \infty.
\]
Moreover, $\lambda_i < 0$, because $2 \Delta \varphi + \lambda \varphi + \varphi^{2\alpha +1} = 0$ has no non-trivial solutions in $\RsetN$ if $\lambda \ge 0$.

Set $\psi_i(x) = |\lambda_i|^{-1/(2\alpha)} \varphi_i (x/|\lambda_i|^{1/2})$. Then, $\psi_1$ and $\psi_2$ are positive solutions of
\begin{equation}\label{normalized_semilin}
2 \Delta \psi - \psi + \psi^{2\alpha +1} = 0 \quad \text{in $\RsetN$},
\end{equation}
vanishing at infinity. Standard results on symmetry of solutions of semilinear equations imply that $\psi_i$ are radially symmetric, i.e. there exist $x_i \in \RsetN$, $i = 1,2$ such that $\psi_i(x-x_i)$ are radially symmetric around the origin (see, for example, \cite{LiNi}). Note that $\int x \, \varphi^2_i \, dx = 0$ is required, so $x_1 = x_2 = 0$.

As uniqueness holds for radial solutions of \eqref{normalized_semilin} (see \cite{Kwong}), one has
\[
\psi_1(x) = \psi_2(x) \quad \text{for all $x \in \RsetN$}.
\]
Since $\int \varphi_i^2 = 1$, one obtains
\[
|\lambda_1|^{\frac{N\alpha - 2}{2 \alpha}} = \intRn \psi_1^2 \, dx = \intRn \psi_2^2 \, dx = |\lambda_2|^{\frac{N\alpha - 2}{2 \alpha}},
\]
that implies $\lambda_1 = \lambda_2$ whenever $\alpha \neq 2/N$, and consequently $\varphi_1 \equiv \varphi_2$. By the equality $\varphi_i = e^{-u_i/2} = \sqrt{m_i}$ one obtains the assertion on uniqueness. As for existence, it follows by standard existence arguments for \eqref{normalized_semilin}.
\end{proof}

\begin{rem} The previous argument uses extensively the special form of $H, f$ and the scaling properties of \eqref{semilin} to reduce the problem of uniqueness of a solution $(\varphi, \lambda)$ to the uniqueness of $\psi$ solving \eqref{normalized_semilin}. Being the equation homogeneous in the $x$-variable, one expects uniqueness ``up-to-translations'', that is, agents have no preferences about the concentration point of $m$ in the state space.

The situation drastically changes if some potential function is added into the problem, namely
\[
V(x) \not\equiv 0.
\]
This case is much more delicate and tricky to treat from the point of view of uniqueness, as it is not possible anymore to remove the unknown $\lambda$ by rescaling. Heuristically, a radially increasing potential $V$ should localize the problem around the origin and boil down the possibility of having multiple ``concentration points''. It is still possible to transform \eqref{MFG} into a semilinear equation (with potential), but uniqueness of a solution $(\varphi, \lambda)$ with $L^2$-constraint is in general an open problem. As far as we know, a complete description of the ground states (positive solutions vanishing as $x \rightarrow \infty$) of 
\begin{equation}\label{semilinV}
2 \Delta \varphi + \lambda \varphi - V(x) \varphi + \varphi^{2\alpha +1} = 0 \quad \text{in $\RsetN$},
\end{equation}
is available only in space dimension $N=1$, with bounded $V$ and in the $L^2$-subcritical case $\alpha < 2/N =1$ (see \cite{mcleod2003}).

We mention that the uniqueness of a couple $(\varphi, \lambda)$ solving \eqref{semilinV} such that $\int \varphi^2 = 1$ is strictly related to the problem of \emph{orbital stability} of ground states of \eqref{semilinV}, that is the long-time stability of standing waves of the associated nonlinear Schr\"odinger equation.

\end{rem}

\begin{rem} If $H$ is non-quadratic, but of the form $H(p) = |p|^\gamma$, $\gamma > 1$, it is possible to transform \emph{radial} solutions of \eqref{MFG} into solutions of a quasi-linear equation involving the $\Delta_{\gamma'}$ operator (see \cite{MR3377677}). A rescaling argument and uniqueness results for this kind of quasilinear equations (see, for example, \cite{pucci1998uniqueness}) then applies as in the quadratic case, leading to uniqueness of radial finite-energy solutions of the MFG system. However, it is not known in general if any $u,m$ solving \eqref{MFG} with non-quadratic Hamiltonian has radial symmetry.
\end{rem}

\begin{rem} We observe that if condition \eqref{finite_energy} does not hold, one may not expect in general uniqueness of solutions for the Kolmogorov equation on $\RsetN$ (see, for example, counterexamples in \cite{BRS11}).
\end{rem}

\appendix

\section{Existence of solutions in the defocusing case}\label{a:defoc}

In this final appendix we will prove the existence theorem for stationary defocusing MFG systems. We will follow the lines of the proof of Theorem \ref{thm_existence}, with emphasis on what needs an adaption with respect to the focusing case.

\begin{proof}[Proof of Theorem \ref{thm_existence_def}] Let $k$ be a positive integer, $\psi$ be a radial mollifier and $\psi_k := k^N \psi(kx)$. Let $m_k$ be the defined by
\begin{equation}\label{mkdef2}
\begin{cases}
- \Delta u_k + H(\nabla u_k) + \lambda_k = V(x) + f(m_{k} \star \psi_k) \star \psi_k \\
- \Delta m_k -{\rm div}(\nabla H(\nabla u_k) \, m_k) = 0  & \text{in $\Tn$},  \\
\intTn m \, dx = 1, \, m > 0.
\end{cases}
\end{equation}
A solution $(u_k, \lambda_k, m_k) \in C^2(\Tn) \times \Rset \times W^{1,p}(\Tn)$ exists for all $k \ge 1$, as $f(\cdot \star \psi_k) \star \psi_k$ is a smoothing operator (see, for example, \cite{LasryLions}, or follow the lines of the proof of Lemma \ref{lemma_welldef}).

\underline{Step 1}. There exists $C > 0$ such that
\begin{equation}\label{energybound2}
|\lambda_k| \le C, \qquad \intTn |\nabla u_k|^{\gamma} m_k \, dx \le C.
\end{equation}
Such estimates are standard in the MFG literature; $\lambda_k$ is positive by the Maximum Principle, and the bounds from above are obtained by combining
\begin{multline*}
\intTn f(m_{k} \star \psi_k) \star \psi_k \, m_k \, dx \le \\ C_H^{-1} \intTn |\nabla u_k|^\gamma m_k \, dx + \intTn V \, m \, dx + \intTn f(m_{k} \star \psi_k) \star \psi_k \, m_k \, dx \le \lambda_k +  C_H,
\end{multline*}
which comes from multiplying the HJB equation in \eqref{mkdef2} by $m_k$ and the Kolmogorov equation by $u_k$, and
\[
-C_H + \lambda_k \le \intTn H(\nabla u_k) \, m_k \, dx + \lambda_k = \intTn V \, dx + \intTn f(m_{k} \star \psi_k) \star \psi_k \, dx,
\]
that is the HJB equation in \eqref{mkdef2} being integrated on $\Tn$.

 Then, Corollary \ref{cor_energy} applies and
\begin{equation}\label{int_bound}
\intTn m^{\alpha + 1}_{k} \, dx \le C
\end{equation}
for some constant $C$ not depending on $k$ (since $\alpha$ satisfies \eqref{critical_alpha}).

\underline{Step 2}. As the integral bound \eqref{int_bound} is established, one may set up the same blow-up procedure as in the proof of Lemma \ref{lem_infty}, in order to obtain $L^\infty$ bounds. In particular, suppose by contradiction that $M_k > 0, x_k \in \Tn$ are such that $M_k := m_k(x_k) = \max_{\Tn} m_k \to \infty$. Then, $v_k(x), \mu_k(x)$ defined as in \eqref{blowupdef} solve
\begin{equation}\label{MFGnn}
\begin{cases}
- \Delta v_k(x) + H_k(\nabla v_k(x)) +\Lambda_k = W_k(x) + F_k(\mu_{k} \star \hat{\psi}_k) \star \hat{\psi}_k (x)  \\
- \Delta \mu_k(x) -{\rm div}(\nabla H_k(\nabla v_k(x)) \, \mu_k(x)) = 0  & \text{in $\mathbb{T}_k^N$},
\end{cases}
\end{equation}
where $\mathbb{T}_k^N = \{x \in \RsetN : x_k + a_k x \in \Tn \}$ and $H_k(p), \Lambda_k , W_k(x), F_k(\mu)$ are as in \eqref{blowup2}. Local a-priori estimates on $\nabla v_k$ and $\mu_k$ provide $\int_{B_r} \mu^{\alpha + 1}_k \, dx \ge c > 0$ (see \eqref{belowLalpha}), which contradicts $\int_{\mathbb{T}_k^N} \mu_k^{\alpha +1}(x) \, dx \le({M_k^{\alpha+1 - \alpha N/\gamma'}} )^{-1} \intTn m_k^{\alpha +1} \, dx \to 0$ as $k \rightarrow \infty$. Hence,
\[
\|m_k\|_{L^\infty} \le C.
\]

\underline{Step 3}. It is now possible to pass to the limit as $k \rightarrow \infty$, and obtain a solution $(u, \lambda, m)$ of \eqref{MFGde} by reasoning as in the end of the proof of Theorem \ref{thm_existence}.
\end{proof}

\small

\bibliographystyle{plain}
\bibliography{bib_focusing}

\medskip
\begin{flushright}
\noindent \verb"marco.cirant@unimi.it"\\
Dipartimento di Matematica, Universit\`a di Milano\\
via Cesare Saldini 50, 20133 Milano (Italy)
\end{flushright}

\end{document}